\documentclass[jsl]{asl}

\usepackage[latin1]{inputenc}
\usepackage{amssymb}
\title{{\bf  A hierarchy of tree-automatic  structures  } }

\keywords{Automata reading ordinal words;  $\om^n$-automatic structures;  $\om$-tree-automatic structures;  boolean algebras;
partial orders; rings; groups; isomorphism relation; models of set theory; independence results.}

\subjclass{}

\author{Olivier Finkel}
\revauthor{Finkel, Olivier }

\address{{\it Equipe de Logique Math\'ematique  \\ Institut de Math\'ematiques de Jussieu}
 \\CNRS and  Universit\'e Paris 7,  France.}

\email{finkel@logique.jussieu.fr}


\author{Stevo Todor{\v{c}}evi{\'c}}
\revauthor{Todor{\v{c}}evi{\'c}, Stevo}

\twoaddress{{\it Equipe de Logique Math\'ematique  \\  Institut de Math\'ematiques de Jussieu}
 \\CNRS and  Universit\'e Paris 7,  France.}{ {\it  Department of Mathematics}
\\ University of Toronto, Toronto, Canada
M5S 2E4.}{2}

\email{stevo@logique.jussieu.fr}




\newtheorem{The}{Theorem}[section]
\newtheorem{Pro}[The]{Proposition}
\newtheorem{Deff}[The]{Definition}
\newtheorem{Lem}[The]{Lemma}
\newtheorem{Rem}[The]{Remark}

\newtheorem{Cor}[The]{Corollary}

\newtheorem{Remark}[The]{Remark}




\newcommand{\fa}{\forall}
\newcommand{\Ga}{\Gamma}

\newcommand{\Si}{\Sigma}
\newcommand{\Sis}{\Sigma^\star}
\newcommand{\Sio}{\Sigma^\omega}
\newcommand{\ra}{\rightarrow}
\newcommand{\hs}{\hspace{12mm}

\noi}
\newcommand{\hsn}{\hspace{12mm}

}

\newcommand{\ite}{\item}

\newcommand{\om}{\omega}
\newcommand{\nl}{\newline}
\newcommand{\noi}{\noindent}

\begin{document}

\begin{abstract}
\noi   We consider $\om^n$-automatic structures which are relational structures whose
domain and relations are accepted by automata reading ordinal words of length $\om^n$ for some integer $n\geq 1$.
We show that all these structures are $\om$-tree-automatic structures presentable  by Muller or Rabin tree automata.
We prove that  the  isomorphism relation for
$\om^2$-automatic (resp. $\om^n$-automatic for $n>2$) boolean algebras (respectively, partial orders, rings, commutative rings,
non commutative rings, non commutative groups) is not determined by the  axiomatic system {\rm ZFC}.
We infer from the proof of the above result that
the  isomorphism problem for  $\om^n$-automatic boolean algebras, $n\geq 2$,  (respectively,  rings, commutative rings,
non commutative rings, non commutative groups)
is neither a $\Si_2^1$-set nor a  $\Pi_2^1$-set.
We obtain that there exist infinitely many $\om^n$-automatic, hence also $\om$-tree-automatic,
atomless  boolean algebras $\mathcal{B}_n$, $n\geq 1$, which are pairwise isomorphic under
the continuum hypothesis {\rm  CH} and pairwise non isomorphic under an alternate axiom {\rm AT}, strengthening a result of \cite{Fin-Tod}.

\end{abstract}

\maketitle

\section{Introduction}

An automatic structure is a  relational structure whose domain and relations are recognizable by finite automata reading finite words.  Automatic structures
 have very nice decidability and definability properties and have been much studied in the last few years,
see \cite{BlumensathGraedel00,BlumensathGraedel04,KNRS,NiesBSL,RubinPhd,RubinBSL}.
They form a subclass of the class of (countable) recursive structures where ``recursive" is replaced by
``recognizable by finite automata".
 Blumensath considered in \cite{Blumensath99} more powerful kinds of automata. If we replace automata by tree automata (respectively, B\"uchi automata
reading infinite words, Muller or Rabin tree automata reading infinite labelled trees) then we get the notion of  tree-automatic (respectively,
$\om$-automatic, $\om$-tree-automatic) structures.
Notice that  an $\om$-automatic or $\om$-tree-automatic structure may have uncountable cardinality.
All these kinds of automatic structures have the two following fundamental properties.
$(1)$ The class
of automatic (respectively,   tree-automatic, $\om$-automatic, $\om$-tree-automatic) structures is closed under first-order interpretations.
$(2)$ The first-order theory of an automatic (respectively,   tree-automatic, $\om$-automatic, $\om$-tree-automatic) structure is decidable. 

 On the other hand, automata reading words of  ordinal length had been firstly considered by B\"uchi  in his investigation of the decidability of 
the monadic second order theory of a countable ordinal, see \cite{bs, Hemmer} and also \cite{Woj,Woj2,Bedon96,BedonCarton98,Bedon2}
 for further references on the subject. 
We investigate in this paper $\om^n$-automatic structures which are relational structures whose
domain and relations are accepted by automata reading ordinal words of length $\om^n$ for some integer $n\geq 1$.
All these structures are $\om$-tree-automatic structures presentable  by Muller or Rabin tree automata.

  A fundamental question about  classes of automatic structures is
the following: ``what is the complexity of the isomorphism problem for some class of automatic structures?"
 The isomorphism problem
for the class of automatic structures, or  even for the class of automatic graphs,
is $\Si_1^1$-complete,   \cite{KNRS}.  
On the other hand,   the isomorphism problem is decidable for automatic ordinals or for
automatic  boolean algebras, see  \cite{KNRS,RubinBSL}. Some more results about other classes of automatic structures may be found   in 
\cite{lics-KuskeLL10}: in particular, the  isomorphism problem for automatic linear orders is not arithmetical.   
Hjorth, Khoussainov, Montalb{\'a}n, and Nies proved
that the isomorphism problem for  $\om$-automatic structures is not a   $\Si_2^1$-set, \cite{HjorthKMN08}. More Recently, Kuske, 
 Liu,  and Lohrey proved in \cite{KuskeLL-CSL10} 
that the isomorphism problem for  $\om$-automatic structures (respectively, partial orders, trees of finite height) 
is not even analytical, i.e. is  not in any class $\Si_n^1$ 
where $n\geq 1$ is an integer. 
In \cite{Fin-Tod} we recently proved  that the  isomorphism relation for $\om$-tree-automatic structures
 (respectively,  $\om$-tree-automatic boolean algebras,   partial orders, rings, commutative rings,
non commutative rings, non commutative groups)  is not determined by the  axiomatic system {\rm ZFC}.  
This showed the importance of different axiomatic systems of Set Theory in the area of $\om$-tree-automatic structures. 

 We prove here that  the  isomorphism relation for
$\om^2$-automatic (resp. $\om^n$-automatic for $n>2$) boolean algebras (respectively, partial orders, rings, commutative rings,
non commutative rings, non commutative groups) is not determined by the  axiomatic system {\rm ZFC}.
We infer from the proof of the above result that
the  isomorphism problem for  $\om^n$-automatic boolean algebras, $n\geq 2$,  (respectively,  rings, commutative rings,
non commutative rings, non commutative groups)
is neither a $\Si_2^1$-set nor a  $\Pi_2^1$-set.
We obtain that there exist infinitely many $\om^n$-automatic, hence also $\om$-tree-automatic,
atomless  boolean algebras $\mathcal{B}_n$, $n\geq 1$, which are pairwise isomorphic under
the continuum hypothesis {\rm  CH} and pairwise non isomorphic under an alternate axiom {\rm AT} (for   ``almost trivial"). 
This way we  improve our   result of \cite{Fin-Tod}, where  we used the open coloring axiom {\rm  OCA} instead,  in two ways: 

\begin{itemize}
\ite[(1)]  by constructing infinitely many structures with independent isomorphism problem, instead of only two such structures, and 
\ite[(2)] by finding such structures which are much simpler than the $\om$-tree-automatic ones, because they are even $\om^n$-automatic. 
\end{itemize}
 
 The paper is organized as follows. In Section 2 we recall definitions and first properties of automata reading ordinal words and of tree automata. 
In Section 3 we define $\om^n$-automatic structures and $\om$-tree-automatic  structures and  we   prove simple properties of  
$\om^n$-automatic structures. We introduce in Section 4 some  particular   $\om^n$-automatic boolean algebras $\mathcal{B}_n$. 
We recall in Section 5 some results of  Set Theory and recall in particular the  Axiom {\rm AT} (`almost trivial')  
and some related notions. We investigate in Section 6 the isomorphism relation for $\om^n$-automatic structures and for $\om$-tree-automatic  structures. 
 Some concluding remarks are given in Section 7.

\section{Automata}

\subsection{$\om^n$-Automata}

\noi When $\Si$ is a finite alphabet, a {\it non-empty finite word} over $\Si$ is any
sequence $x=a_0.a_1\ldots a_k$, where $a_i\in\Sigma$
for $i=1,\ldots ,k$ , and  $k$ is an integer $\geq 0$. The {\it length}
 of $x$ is $k+1$.
 The {\it empty word} has no letter and is denoted by $\varepsilon$; its length is $0$.
 For $x=a_0.a_1\ldots a_k$, we write $x(i)=a_i$.
 $\Sis$  is the {\it set of finite words} (including the empty word) over $\Sigma$.

  We  assume  the reader to be familiar with the elementary theory of countable ordinals.
Let $\Si$ be a finite  alphabet, and  $\alpha$ be an ordinal; a word of length
 $\alpha$ (or  $\alpha$-word) over the alphabet $\Si$ is an  $\alpha$-sequence $(x(\beta))_{\beta < \alpha}$
(or sequence of length $\alpha$) of letters in $\Si$. The set of $\alpha$-words over the alphabet $\Si$ is denoted by $\Si^\alpha$.
The concatenation of an $\alpha$-word $x=(x(\beta))_{\beta < \alpha}$ and of a $\gamma$-word $y=(y(\beta))_{\beta < \gamma}$ is the 
$(\alpha + \gamma)$-word $z=(z(\beta))_{\beta < \alpha+ \gamma}$  such that $z(\beta)=x(\beta)$ for $\beta < \alpha$ and 
$z(\beta)=y(\beta')$ for $\alpha \leq \beta=\alpha + \beta' < \alpha+ \gamma$; it is denoted $z = x\cdot y$ or simply $z = xy$.

We  assume that the reader is familiar with the notion of B\"uchi automaton reading
infinite words over a finite alphabet which can be found for instance in \cite{Thomas90,Staiger97}.  Informally speaking an
$\om$-word $x$ over $\Si$  is accepted by a B\"uchi automaton $\mathcal{A}$ iff there is an infinite run of
$\mathcal{A}$ on $x$  enterring  infinitely often in  some final state of $\mathcal{A}$. The $\om$-language
$L(\mathcal{A})\subseteq \Sio$  accepted by the B\"uchi automaton $\mathcal{A}$ is
the set of  $\om$-words $x$ accepted by $\mathcal{A}$.  A Muller automaton is a finite automaton equipped with a set $\mathcal{F}$
of accepting sets of states.  An $\om$-word $x$ over $\Si$  is accepted by a Muller  automaton $\mathcal{A}$ iff there is an infinite run of
$\mathcal{A}$ on $x$  such that the set of states appearing   infinitely often during this run is an accepting set of states, i.e. belongs to  $\mathcal{F}$.
It is well known that an $\om$-language is accepted by a  B\"uchi automaton iff it is accepted by a Muller automaton.

 We shall define $\om^n$-automatic structures as relational structures presentable by automata reading words of length $\om^n$, for some
integer $n\geq 1$.
In order to read some words of transfinite length greater than $\om$, an   automaton  must have a  transition relation for successor steps defined as usual but also
a  transition relation for limit steps.  After the reading of a word whose  length is a limit  ordinal,
the state of the automaton will depend on the set of states
which cofinally appeared  during the run of the automaton.
These automata have been firstly considered by B\"uchi,  see   \cite{bs, Hemmer}.  We recall now their definition and behaviour.

\begin{Deff}[\cite{Woj,Woj2,Bedon96}]  An ordinal  Büchi automaton is a  sextuple
 $(\Sigma, Q,$ $q_0, \Delta, \gamma, F)$,  where
$\Si $ is a finite alphabet,
$Q$ is a finite set of states,
$q_0\in Q$ is the   initial state,
$\Delta \subseteq Q \times \Sigma \times Q$ is the transition relation for successor steps, and
$\gamma \subseteq P(Q) \times Q$ is the transition relation for limit steps.

\hs  A run of the ordinal   Büchi automaton
 $\mathcal{A}=(\Si, Q, q_0, \Delta , \gamma, F)$  reading a word  $\sigma$ of length
 $\alpha$, is an ($\alpha +1$)-sequence of states  $x$  defined by:
$x(0)=q_0$ and, for  $i<\alpha$, $(x(i), \sigma(i), x(i+1))\in \Delta$ and, for
 $i$  a limit ordinal, $(Inf(x,i),x(i)) \in \gamma$, where $Inf(x, i)$ is the set of states which cofinally appear during the reading
of the  $i$ first letters of  $\sigma$, i.e.
 $$Inf(x, i)=\{ q\in Q \mid  \fa \mu <i, \exists \nu<i  \mbox{ such that }
 \mu<\nu  \mbox{  and  } x(\nu)=q \}$$
\noi A run  $x$  of the automaton  $\mathcal{A}$ over the word
 $\sigma$
of length  $\alpha$ is called successful if $x(\alpha) \in F$. A word $\sigma$
of length  $\alpha$
is accepted  by  $\mathcal{A}$ if there exists a  successful run of $\mathcal{A}$ over $\sigma$. We denote
 $L_\alpha(\mathcal{A})$ the set of words of length  $\alpha$ which are accepted by  $\mathcal{A}$.
 An $\alpha$-language $L$ is a regular  $\alpha$-language if there exists an
 ordinal  Büchi automaton $\mathcal{A}$ such that  $L=L_\alpha(\mathcal{A})$.

\hs An ordinal  Büchi automaton $(\Sigma, Q, q_0, \Delta, \gamma, F)$ is said to be deterministic iff
$\Delta \subseteq Q \times \Sigma \times Q$ is in fact the graph of a
function from $Q \times \Sigma$ into $Q$ and $\gamma \subseteq P(Q) \times Q$ is the graph of a function from $P(Q)$ into $Q$. In that case there is at most
one run of the automaton over a given word  $\sigma$.
\end{Deff}

\begin{Rem} When we consider only finite words, the language accepted by an
ordinal  Büchi automaton
is a  rational language. If we consider only $\om$-words, the  $\omega$-languages acceped by
 ordinal  Büchi automata  are the  $\omega$-languages accepted
by Muller automata and then also by Büchi automata.
\end{Rem}

\begin{Deff}
An  $\om^n$-automaton is an ordinal  Büchi  automaton reading only words of length $\om^n$  for some integer $n\geq 1$.
\end{Deff}

 We can obtain  regular  $\om^n$-languages from regular $\om$-languages and regular $\om^{n-1}$-languages by the use of the  notion of substitution.
The following  result 
appeared in \cite{Hemmer} and has been  also proved in \cite{Finkel-loc01}.

\begin{Pro}\label{sub} Let $n\geq 2$ be an integer.
An $\om^n$-language $L \subseteq \Si^{\om^n}$  is  regular iff it is obtained from a regular
$\om$-language  $R \subseteq \Ga^\om$
by substituting in every $\om$-word $\sigma \in R$
a regular  $\om^{n-1}$-language $L_a  \subseteq \Si^\om$ to each letter $a\in \Ga$.
\end{Pro}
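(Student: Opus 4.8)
The plan is to prove the two directions separately, both by exploiting the standard fact (going back to B\"uchi) that, for $\om$-words, an ordinal B\"uchi automaton is equivalent to a Muller automaton, so that on the ``top level'' we may freely switch between the two acceptance modes. Write every ordinal $\beta<\om^n$ uniquely in the form $\beta=\om^{n-1}\cdot i+\beta'$ with $i<\om$ and $\beta'<\om^{n-1}$; this Cantor-normal-form bookkeeping identifies $\Si^{\om^n}$ with sequences of length $\om$ of blocks, each block being a word in $\Si^{\om^{n-1}}$, and it is this identification that underlies the notion of substitution in the statement.

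For the easy direction (substitution yields regular), I would start from a regular $\om$-language $R\subseteq\Ga^\om$ accepted by a Muller automaton $\mathcal{M}=(\Ga,Q,q_0,\delta,\mathcal{F})$ and, for each $a\in\Ga$, from an ordinal B\"uchi automaton $\mathcal{A}_a$ with $L_{\om^{n-1}}(\mathcal{A}_a)=L_a$. I would build an $\om^n$-automaton $\mathcal{A}$ that, on an input viewed as a sequence of $\om^{n-1}$-blocks, processes the $i$-th block with $\mathcal{A}_{a}$ where $a$ is the $i$-th letter guessed from $R$; concretely, at each ordinal of the form $\om^{n-1}\cdot i$ the automaton nondeterministically picks the next letter $a$ of an $R$-run, moves $\mathcal{M}$ accordingly, and then simulates $\mathcal{A}_a$ on the block, using $\mathcal{A}_a$'s successor- and limit-transition relations for $\beta'$ ranging over $\om^{n-1}$; at the block boundary (a limit ordinal $\om^{n-1}\cdot i$ for $i$ a limit, and at each successor block-boundary by an $\varepsilon$-free reset) it checks that $\mathcal{A}_a$'s run on the block was successful and resets to the start state of the next $\mathcal{A}_b$. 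The limit-transition relation $\gamma$ of $\mathcal{A}$ at the ``outer'' limits $\om^{n-1}\cdot i$ must also encode the Muller condition of $\mathcal{M}$ — here one replaces $\mathcal{M}$ by an equivalent ordinal B\"uchi automaton on $\om$-words and carries its limit transition through. A routine induction on $n$, or a direct verification against the definition of a run, shows $L_{\om^n}(\mathcal{A})$ is exactly the substituted language.

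For the converse (regular implies obtained by substitution) I would take an $\om^n$-automaton $\mathcal{A}=(\Si,Q,q_0,\Delta,\gamma,F)$ with $L=L_{\om^n}(\mathcal{A})$ and recover the block structure from the dynamics of runs. Let $\Ga$ be the finite set of pairs $(p,q)\in Q\times Q$ that are ``$\om^{n-1}$-reachable'', i.e.\ such that there is a run segment of $\mathcal{A}$ of length $\om^{n-1}$ starting in $p$ and ending in $q$; for each such pair put $L_{(p,q)}:=\{w\in\Si^{\om^{n-1}}:\mathcal{A}\text{ has a run of length }\om^{n-1}\text{ on }w\text{ from }p\text{ to }q\}$, which is a regular $\om^{n-1}$-language by simply redirecting $\mathcal{A}$'s start state to $p$ and taking $\{q\}$ as the final set. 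Then define $R\subseteq\Ga^\om$ to be the set of $\om$-sequences $((p_0,q_0'),(p_1,q_1'),\dots)$ with $p_0=q_0$, $p_{i+1}=q_i'$, and — crucially — such that, reading the letters of this sequence, an ordinal B\"uchi automaton tracking the outer limit-transitions $\gamma$ at positions $\om^{n-1}\cdot i$ together with the final condition $F$ accepts; this $R$ is $\om$-regular because it is recognised by a finite automaton over $\Ga$ whose states remember the current $\mathcal{A}$-state and whose acceptance is governed by $\gamma$ and $F$, and one checks by the definition of a run that $w\in L$ iff $w$ decomposes block-by-block into words witnessing some element of $R$ — i.e.\ $L$ is exactly $R$ with each letter $(p,q)$ substituted by $L_{(p,q)}$.

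The main obstacle, in both directions, is handling the limit-step transition relation $\gamma$ correctly at the outer limit ordinals $\om^{n-1}\cdot i$ with $i$ a limit: the set $Inf(x,\om^{n-1}\cdot i)$ of cofinally occurring states mixes information coming from infinitely many different blocks, so the bookkeeping must ensure that the ``block summaries'' $(p_j,q_j')$ carry enough information for $\gamma$ to be simulated on the $\Ga$-level. The clean way around this is to first normalise $\mathcal{A}$ so that after each block it passes through a distinguished ``boundary'' state, which makes $Inf$ at outer limits depend only on the sequence of boundary states and hence be computable by the $\Ga$-automaton; establishing that such a normalisation preserves the accepted language (again via the B\"uchi–Muller equivalence for $\om$-words applied level by level) is the one place where a little care is genuinely needed. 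Everything else is a straightforward unwinding of the definition of a run of an ordinal B\"uchi automaton, proceeding by induction on $n$ with base case $n=2$. \ep
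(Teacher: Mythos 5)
The paper itself does not prove Proposition \ref{sub} (it is quoted from \cite{Hemmer} and \cite{Finkel-loc01}), so your sketch has to be judged on its own merits. The first direction (substitution yields a regular $\om^n$-language) is essentially right: a product automaton whose state carries the current block automaton $\mathcal{A}_a$ together with the current state of the outer automaton works, because the outer component is constant within a block, so at the top limit $\om^n$ the projection of $Inf(x,\om^n)$ onto the outer component is exactly the set of outer states visited in infinitely many blocks, and the limit relation can be defined from that. (A small slip: since the block index $i$ ranges over $\om$, there are no ``outer limits $\om^{n-1}\cdot i$ with $i$ a limit'' below $\om^n$; the limit transitions at the successor boundaries $\om^{n-1}\cdot(j+1)$ are what certify the previous block, and only the single limit $\om^n$ concerns the outer acceptance.)

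The genuine gap is in the converse direction. Your alphabet of pairs $(p,q)$ of entry/exit states does not determine $Inf(x,\om^n)$, and substitution allows \emph{infinitely many} blocks to be replaced simultaneously: two $\om^{n-1}$-words both admitting runs from $p$ to $q$ may have runs visiting different state sets, and swapping infinitely many such blocks changes the set of states occurring cofinally below $\om^n$, hence possibly the value of $\gamma$ at the top limit. Consequently the language obtained by substituting your $L_{(p,q)}$ into your $R$ can be strictly larger than $L$, i.e.\ the ``only if'' direction of the characterization fails as written. The standard repair is to refine the block summaries: take letters $(p,S,q)$ where $S$ is the \emph{exact} set of states visited during the block run (still a regular $\om^{n-1}$-language, by tracking the visited set in the finite control), and note that $Inf(x,\om^n)$ is then the union of the sets $S$ of the letters occurring infinitely often in the summary sequence, so acceptance via $\gamma$ and $F$ becomes a genuine Muller condition on $\Ga^\om$ and the substitution is exact. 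Your proposed fix --- normalising $\mathcal{A}$ so that it passes through a distinguished boundary state between blocks --- does not achieve this: inner states still occur cofinally at $\om^n$ and still feed into $\gamma$, and a normalisation making the final limit transition depend only on the boundary states is essentially equivalent to the proposition you are trying to prove, so it cannot be waved through as ``a little care''.
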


\hsn We now recall some fundamental  properties of regular $\om^n$-languages.

\begin{The}[B\"uchi-Siefkes, see \cite{bs, Hemmer,Bedon96}]  Let $n\geq 1$ be an integer.
One can effectively decide whether the  $\om^n$-language $L(\mathcal{A})$  accepted by a given $\om^n$-automaton    $\mathcal{A}$ is empty or not.
\end{The}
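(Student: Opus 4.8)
The statement to be proved is the decidability of emptiness for $\omega^n$-automata (B\"uchi--Siefkes). Let me sketch a proof.

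The plan is to proceed by induction on $n \geq 1$, using Proposition~\ref{sub} (the substitution characterization) as the engine that reduces the $\om^n$ case to the $\om$ case plus the $\om^{n-1}$ case.

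For the base case $n = 1$: an $\om^1$-automaton reading words of length $\om$ is, by the Remark, essentially a Muller automaton (the limit transition function $\gamma$ together with the final-state condition at stage $\om$ exactly captures the Muller acceptance condition on the set of cofinally-appearing states). So $L(\mathcal{A})$ is an $\om$-regular language, and emptiness of $\om$-regular languages is decidable by the classical theory of B\"uchi/Muller automata: one checks whether some accepting loop is reachable from the initial state and, if using the Muller condition, whether some accepting set $F \in \mathcal{F}$ forms a strongly connected reachable subgraph on which every state is hit — all of which is a finite graph-search computation on the transition diagram.

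For the inductive step, assume emptiness is decidable for $\om^{n-1}$-automata. Given an $\om^n$-automaton $\mathcal{A}$ over $\Si$, I would first, from the description of $\mathcal{A}$, effectively recover the data witnessing regularity in the sense of Proposition~\ref{sub}: a regular $\om$-language $R \subseteq \Ga^\om$ and, for each letter $a \in \Ga$, a regular $\om^{n-1}$-language $L_a \subseteq \Si^{\om^{n-1}}$ with $L(\mathcal{A})$ the substitution of the $L_a$ into $R$. (Concretely $\Ga$ can be taken to be a set of pairs of states of $\mathcal{A}$, recording the ``profile" of an $\om^{n-1}$-block: the state entered at the start of the block and the state produced at its limit; $L_a$ is then the set of $\om^{n-1}$-words driving $\mathcal{A}$ from the first component to the second, i.e. $\mathcal{A}$ with reassigned initial and final states restricted to length $\om^{n-1}$, hence an $\om^{n-1}$-automaton; and $R$ is the Muller/B\"uchi $\om$-language over $\Ga$ obtained by treating each block as a single transition and applying the limit condition of $\mathcal{A}$ at the final stage $\om^n = \lim_k \om^{n-1}\cdot k$.) Now $L(\mathcal{A}) = \emptyset$ iff for every $a$ appearing on some word of $R$ we have $L_a = \emptyset$, or more precisely iff the $\om$-language $R'$ obtained from $R$ by deleting (the transitions labelled by) every letter $a$ with $L_a = \emptyset$ is empty. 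By the inductive hypothesis we can decide $L_a = \emptyset$ for each of the finitely many $a \in \Ga$; having done so we form $R'$, an effectively given regular $\om$-language, and decide $R' = \emptyset$ by the base case. Since $L(\mathcal{A}) = \emptyset \iff R' = \emptyset$, this decides emptiness for $\mathcal{A}$, completing the induction.

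The main obstacle is the inductive step's bookkeeping: one must verify carefully that the passage ``$\om^n$-automaton $\leadsto$ ($R$ over the profile alphabet $\Ga$, the block-languages $L_a$)" is genuinely effective and faithful — in particular that the limit-transition relation $\gamma$ of $\mathcal{A}$ behaves correctly both at the limit stages $\om^{n-1}\cdot k$ for $k < \om$ (absorbed into the definition of $R$ as transitions between block-profiles) and at the top limit stage $\om^n$ (absorbed into the final-state/Muller condition of $R$), and that nondeterminism is handled by letting profiles range over all reachable start/limit-state pairs. Once this correspondence is set up cleanly, the decision procedure and its correctness are immediate; everything else is the classical emptiness test for $\om$-automata. \ep
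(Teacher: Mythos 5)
The paper does not prove this theorem at all: it is quoted as a classical result of B\"uchi--Siefkes with references, so your attempt can only be measured against the standard argument. Your overall plan is the right one and matches that argument in spirit: induct on $n$, observe that for $n=1$ the limit transition $\gamma$ together with the final-state check at stage $\om$ is exactly a Muller condition, so emptiness is the classical decidable problem; for the step, decompose an $\om^n$-word into $\om$ blocks of length $\om^{n-1}$ as in Proposition~\ref{sub}, decide emptiness of the finitely many block languages by induction, delete the letters whose block language is empty, and test emptiness of the resulting $\om$-regular language. The reduction ``$L(\mathcal{A})=\emptyset$ iff $R$ restricted to letters with nonempty block language is empty'' is correct for a faithful substitution decomposition.

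The genuine gap is in the decomposition itself: your profile alphabet $\Ga$ of \emph{pairs} (state entered at the start of a block, state produced at its limit) is too coarse to ``apply the limit condition of $\mathcal{A}$ at the final stage $\om^n$.'' The set $Inf(x,\om^n)$ to which $\gamma$ is applied at the top consists of all states occurring in infinitely many blocks at \emph{any} position inside them, not just at block boundaries: a state may be visited in the interior of every block yet never be an entry or limit state, hence it lies in $Inf(x,\om^n)$ although no letter of your word over $\Ga$ records it, and knowing which boundary pairs occur infinitely often does not determine $Inf(x,\om^n)$. So the $\om$-language $R$ you describe cannot be defined so that the substitution equals $L(\mathcal{A})$, and the equivalence your emptiness test relies on breaks. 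The repair is to enrich the profiles to triples $(p,S,q)$ with $S\subseteq Q$ the set of states visited during the block (including $p$): the block language $L_{(p,S,q)}$ is accepted by an $\om^{n-1}$-automaton obtained as a product of $\mathcal{A}$ with a monotone subset-tracking component (so the induction hypothesis still applies), $R$ requires consecutive letters to match (the $q$ of one letter equals the $p$ of the next, and the first $p$ is $q_0$), and acceptance of $R$ asks that, with $I$ the union of the $S$-components of the letters occurring infinitely often, there be $q_f\in F$ with $(I,q_f)\in\gamma$ --- a genuine Muller condition since it depends only on the set of letters seen infinitely often. With this bookkeeping your procedure is effective and correct; as written, the key step is not.
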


\begin{The}\label{reg}[see \cite{Bedon96}]  Let $n\geq 1$ be an integer.
The class of regular  $\om^n$-languages  is effectively closed under finite union, finite intersection, and complementation, i.e.
we can effectively construct, from two $\om^n$-automata    $\mathcal{A}$ and $\mathcal{B}$, some
$\om^n$-automata
$\mathcal{C}_1$, $\mathcal{C}_2$, and $\mathcal{C}_3$,  such that $L(\mathcal{C}_1)=L(\mathcal{A}) \cup L(\mathcal{B})$,
$L(\mathcal{C}_2)=L(\mathcal{A}) \cap L(\mathcal{B})$,  and $L(\mathcal{C}_3)$ is the complement of $L(\mathcal{A})$.
\end{The}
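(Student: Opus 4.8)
\noi\emph{Proof idea.} The plan is to obtain closure under finite union and finite intersection by direct automaton constructions valid for every $n\geq 1$, and closure under complementation by induction on $n$, using the substitution characterisation of Proposition \ref{sub} to descend from $\om^n$ to $\om^{n-1}$; the only genuinely non-trivial ingredient will then be the base case $n=1$, i.e.\ complementation of regular $\om$-languages. (Once complementation is available, finite intersection can alternatively be recovered from union and complementation by De Morgan's law, but it is cleaner to give it directly.)

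\noi For the union of two $\om^n$-automata over $\Si$ with disjoint state sets I would add a fresh initial state whose successor transitions copy those leaving the two old initial states, keep both transition relations and both limit-transition relations unchanged, and take as final states the union of the two final-state sets: since $\om^n$ is a limit but $0$ is a successor position, the fresh state is used only at the very start and never becomes cofinal, so a word of length $\om^n$ is accepted iff it is accepted by one of the two automata. For intersection I would use the product automaton on $Q_{\mathcal A}\times Q_{\mathcal B}$, with $((p,q),a,(p',q'))$ a successor transition exactly when $(p,a,p')\in\Delta_{\mathcal A}$ and $(q,a,q')\in\Delta_{\mathcal B}$, with $F_{\mathcal A}\times F_{\mathcal B}$ as final states, and with $(S,(p,q))$ a limit transition exactly when $(\pi_1(S),p)\in\gamma_{\mathcal A}$ and $(\pi_2(S),q)\in\gamma_{\mathcal B}$, where $\pi_1,\pi_2$ are the projections of $P(Q_{\mathcal A}\times Q_{\mathcal B})$ onto $P(Q_{\mathcal A})$ and $P(Q_{\mathcal B})$. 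Correctness of the limit transition rests on the elementary observation that if $z$ pairs a run $x$ of $\mathcal A$ with a run $y$ of $\mathcal B$, then $\pi_1(Inf(z,i))=Inf(x,i)$ and $\pi_2(Inf(z,i))=Inf(y,i)$ at every limit $i$: the non-obvious inclusion follows by partitioning the cofinal set $\{\nu<i:x(\nu)=p\}$ according to the finitely many values of the second coordinate and using that a finite union of bounded subsets of a limit ordinal is bounded, so one of the pieces stays cofinal.

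\noi For complementation I would argue by induction on $n$. When $n=1$, a regular $\om$-language is Muller-recognizable (as recalled above), hence recognizable by a \emph{deterministic} Muller automaton by McNaughton's theorem; complementing its acceptance family and then re-encoding the acceptance condition through the limit transition — on the unique run, the end state $x(\om)$ is a fixed function of the set of recurring states — yields a deterministic ordinal B\"uchi automaton for the complement. For $n\geq 2$, write a regular $L\subseteq\Si^{\om^n}$, via Proposition \ref{sub}, as the substitution of regular $\om^{n-1}$-languages $L_a\subseteq\Si^{\om^{n-1}}$ $(a\in\Ga)$ into a regular $\om$-language $R\subseteq\Ga^\om$. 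For $b\subseteq\Ga$ put $M_b=\bigcap_{a\in b}L_a\cap\bigcap_{a\in\Ga\setminus b}(\Si^{\om^{n-1}}\setminus L_a)$; by the induction hypothesis — closure of regular $\om^{n-1}$-languages under finite intersection and complementation — each $M_b$ is regular, and the family $(M_b)_{b\subseteq\Ga}$ partitions $\Si^{\om^{n-1}}$. Since $\om^n=\om^{n-1}\cdot\om$, every $\sigma\in\Si^{\om^n}$ splits uniquely into blocks $u_0u_1u_2\cdots$ with $u_i\in\Si^{\om^{n-1}}$, and one checks that $\sigma\in L$ iff $(b(u_i))_{i<\om}\in R'$, where $b(u)=\{a\in\Ga:u\in L_a\}$ and $R'=\{(c_i)_{i<\om}\in(P(\Ga))^\om:\exists\,(a_i)_{i<\om}\in R\ \fa i\ a_i\in c_i\}$. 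Here $R'$ is regular, being obtained from $R$ by a letter-to-letter substitution, so $(P(\Ga))^\om\setminus R'$ is regular by the case $n=1$; since the $M_b$ partition $\Si^{\om^{n-1}}$, the complement $\Si^{\om^n}\setminus L$ is exactly the substitution of $(M_c)_{c\in P(\Ga)}$ into $(P(\Ga))^\om\setminus R'$, hence regular by Proposition \ref{sub}. All the constructions involved (product automata, B\"uchi/Muller manipulations, the substitution construction from Proposition \ref{sub}) are effective, which gives the effectiveness statement.

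\noi The main obstacle is the base case of the complementation: it is precisely McNaughton's determinization theorem for $\om$-automata (or, for a self-contained treatment inside the theory of automata on ordinals, its analogue in the style of B\"uchi and Siefkes, proved via Ramsey-type arguments on finite condensations of the ordinal). Granting it, the product construction, the cofinality/pigeonhole lemma, and the inductive reduction through Proposition \ref{sub} are all routine bookkeeping.
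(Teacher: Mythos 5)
This theorem is not proved in the paper at all: it is recalled with the citation [see \cite{Bedon96}] (cf.\ also \cite{bs,Hemmer}), so there is no in-paper argument to compare yours with; the comparison below is therefore with the cited literature. Your sketch is essentially correct and self-contained modulo two standard external inputs. The disjoint-union construction for union and the product construction for intersection are sound, and you correctly isolate the one point that needs an argument, namely that at a limit $i$ the projections of $Inf(z,i)$ for a paired run $z=(x,y)$ are exactly $Inf(x,i)$ and $Inf(y,i)$; your pigeonhole argument (a cofinal subset of a limit ordinal split into finitely many pieces has a cofinal piece) does prove this, and it is what makes the componentwise limit-transition relation work for nondeterministic runs. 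For complementation, your base case $n=1$ rests on McNaughton's determinization (consistent with the Remark in Section 2.1 identifying ordinal-automaton $\om$-languages with Muller/B\"uchi ones), and your inductive step is a nice use of Proposition \ref{sub} in both directions: the Boolean-combination languages $M_b$, $b\subseteq\Ga$, partition $\Si^{\om^{n-1}}$, the superset language $R'$ over the alphabet $P(\Ga)$ is regular by a letter-to-letter substitution, and the complement of $L$ is the substitution of the $M_b$ into the complement of $R'$; the verification you indicate (uniqueness of the block decomposition of an $\om^n$-word, and $u_i\in M_{c_i}$ iff $c_i=b(u_i)$) is exactly right. Two small caveats: the effectiveness claim needs Proposition \ref{sub} in effective form in both directions (this is available in \cite{Hemmer,Finkel-loc01}, but the proposition as stated does not say so), and in the paper's statement of Proposition \ref{sub} the languages $L_a$ should be read as subsets of $\Si^{\om^{n-1}}$, as you do. By contrast, the treatments of B\"uchi--Siefkes and Bedon obtain complementation uniformly for words over arbitrary countable ordinals (via congruence/determinization methods intrinsic to ordinal automata), whereas your route trades that generality for a shorter argument that stays at the level of classical $\om$-automata plus the substitution characterization, which is entirely adequate for the fixed lengths $\om^n$ used in this paper.
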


\hsn  We assume the reader to be familiar with basic notions of topology that
may be found in \cite{Moschovakis80,Kechris94,PerrinPin}.
The usual  Cantor topology on $\Sio$ is  the product topology obtained from the discrete topology on the finite set $\Si$, 
 for which {\it open subsets} of
$\Sio$ are in the form $W\cdot \Si^\om$, where $W\subseteq \Sis$.

Let  $n\geq 1$ be an integer.  Let $B: \om \ra \om^n$ be a recursive bijection. Then we have a  bijection $\phi$ from
$\Si^{\om^n}$ onto $\Si^{\om}$ defined by $\phi(x)(n)=x(B(n))$ for each integer $n\geq 0$. Then for each $\om^n$-language $L\subseteq \Si^{\om^n}$ we have the associated
$\om$-language $\phi(L)=\{ \phi(x) \mid x\in L \}$.
Consider now a {\it regular} $\om^n$-language $L\subseteq \Si^{\om^n}$. It is stated in \cite{DFR} that  $\phi(L)$ is Borel (in the class 
 ${\bf \Sigma}_{2n+1}^0$).

\subsection{Tree automata}
 We introduce now  languages of infinite binary trees whose nodes
are labelled in a finite alphabet $\Si$.

 A node of an infinite binary tree is represented by a finite  word over
the alphabet $\{l, r\}$ where $r$ means ``right" and $l$ means ``left". Then an
infinite binary tree whose nodes are labelled  in $\Si$ is identified with a function
$t: \{l, r\}^\star \ra \Si$. The set of  infinite binary trees labelled in $\Si$ will be
denoted $T_\Si^\om$.
A tree language is a subset of $T_\Si^\om$, for some alphabet $\Si$.
(Notice that we shall only consider in the sequel {\it infinite} trees so we shall
often use the term tree instead of  {\it infinite} tree).

 Let $t$ be a tree. A branch $B$ of $t$ is a subset of the set of nodes of $t$ which
is linearly ordered by the prefix relation $\sqsubseteq$ and which
is closed under this prefix relation,
i.e. if  $x$ and $y$ are nodes of $t$ such that $y\in B$ and $x \sqsubseteq y$ then $x\in B$.
\nl A branch $B$ of a tree is said to be maximal iff there is not any other branch of $t$
which strictly contains $B$.

    Let $t$ be an infinite binary tree in $T_\Si^\om$. If $B$ is a maximal branch of $t$,
then this branch is infinite. Let $(u_i)_{i\geq 0}$ be the enumeration of the nodes in $B$
which is strictly increasing for the prefix order.
  The infinite sequence of labels of the nodes of  such a maximal
branch $B$, i.e. $t(u_0)t(u_1) \ldots t(u_n) \ldots $  is called a path. It is an $\om$-word
over the alphabet $\Si$.

 For a tree $t \in  T_\Si^\om$ and $u\in \{l, r\}^\star$, we shall denote  $t_u : \{l, r\}^\star \ra \Si$ the subtree defined by
$t_u(v)=t(uv)$ for all $v\in \{l, r\}^\star$. It is in fact the subtree of $t$ which is rooted in $u$.

 We are now going to define tree automata and regular languages of infinite trees.

\begin{Deff} A (nondeterministic) tree automaton  is a quadruple $\mathcal{A}=(Q,\Si,\Delta, q_0)$, where $Q$
is a finite set of states, $\Sigma$ is a finite input alphabet, $q_0 \in Q$ is the initial state
and $\Delta \subseteq  Q \times   \Si   \times  Q \times   Q$ is the transition relation.
 A run of the tree automaton  $\mathcal{A}$ on an infinite binary tree $t\in T_\Si^\om$ is an infinite binary tree $\rho \in T_Q^\om$ such that:
\nl (a)  $\rho (\varepsilon)=q_0$  and  ~~(b) for each $u \in \{l, r\}^\star$,  $(\rho(u), t(u), \rho(ul), \rho(ur))  \in \Delta$.

\hs 
A Muller  (nondeterministic) tree automaton  is a  5-tuple $\mathcal{A}=(Q,\Si,\Delta, q_0, \mathcal{F})$, where $(Q,\Si,\Delta, q_0)$ is a
tree automaton and $\mathcal{F} \subseteq 2^Q$ is the collection of  designated state sets.
 A run $\rho$ of the  Muller  tree automaton $\mathcal{A}$ on an infinite binary tree $t\in T_\Si^\om$ is said to be accepting if
for each path $p$ of $\rho$, the set of   states appearing infinitely  often on this path is in $\mathcal{F}$.
 The tree language $L(\mathcal{A})$ accepted by the  Muller tree automaton $\mathcal{A}$ is the set of infinite binary trees $t\in T_\Si^\om$
such that there is (at least) one accepting run of $\mathcal{A}$ on $t$.
 A tree language $L \subseteq T_\Si^\om$  is regular iff there exists a  Muller automaton $\mathcal{A}$ such that $L=L(\mathcal{A})$.
\end{Deff}

 We now recall some fundamental closure properties of regular  tree languages.

\begin{The}[Rabin, see \cite{Rabin69,Thomas90,2001automata,PerrinPin}]
The class of regular  tree languages is effectively closed under finite union, finite intersection, and complementation, i.e.
we can effectively construct, from two  Muller  tree automata   $\mathcal{A}$ and $\mathcal{B}$, some
 Muller tree automata
$\mathcal{C}_1$, $\mathcal{C}_2$, and $\mathcal{C}_3$,  such that $L(\mathcal{C}_1)=L(\mathcal{A}) \cup L(\mathcal{B})$,
$L(\mathcal{C}_2)=L(\mathcal{A}) \cap L(\mathcal{B})$,  and $L(\mathcal{C}_3)$ is the complement of $L(\mathcal{A})$.
\end{The}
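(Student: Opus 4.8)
The plan is to treat the three operations in increasing order of difficulty: \emph{union} and \emph{intersection} are routine product-style constructions on Muller tree automata, while \emph{complementation} is the substantial point --- it is the content of Rabin's complementation theorem --- and I would obtain it through the positional determinacy of parity games. Write $\mathcal{A}=(Q_{\mathcal A},\Si,\Delta_{\mathcal A},q_{\mathcal A},\mathcal{F}_{\mathcal A})$ and $\mathcal{B}=(Q_{\mathcal B},\Si,\Delta_{\mathcal B},q_{\mathcal B},\mathcal{F}_{\mathcal B})$ with $Q_{\mathcal A}\cap Q_{\mathcal B}=\emptyset$. For $\mathcal{C}_1$ I would adjoin a fresh initial state $q_0$, keep $\Delta_{\mathcal A}\cup\Delta_{\mathcal B}$, add every transition $(q_0,a,p,p')$ for which $(q_{\mathcal A},a,p,p')\in\Delta_{\mathcal A}$ or $(q_{\mathcal B},a,p,p')\in\Delta_{\mathcal B}$, and set $\mathcal{F}_{\mathcal{C}_1}=\mathcal{F}_{\mathcal A}\cup\mathcal{F}_{\mathcal B}$; since after the root every path of a run stays inside $Q_{\mathcal A}$ or inside $Q_{\mathcal B}$ and $q_0$ occurs only at the root, a run of $\mathcal{C}_1$ on $t$ is exactly an $\mathcal{A}$-run or a $\mathcal{B}$-run on $t$, so $L(\mathcal{C}_1)=L(\mathcal{A})\cup L(\mathcal{B})$. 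For $\mathcal{C}_2$ I would take state set $Q_{\mathcal A}\times Q_{\mathcal B}$, initial state $(q_{\mathcal A},q_{\mathcal B})$, a transition $\bigl((q,q'),a,(p_l,p'_l),(p_r,p'_r)\bigr)$ whenever $(q,a,p_l,p_r)\in\Delta_{\mathcal A}$ and $(q',a,p'_l,p'_r)\in\Delta_{\mathcal B}$, and accepting family $\{S\subseteq Q_{\mathcal A}\times Q_{\mathcal B}\ :\ \pi_{\mathcal A}(S)\in\mathcal{F}_{\mathcal A}\ \text{and}\ \pi_{\mathcal B}(S)\in\mathcal{F}_{\mathcal B}\}$; a run of $\mathcal{C}_2$ on $t$ is precisely a pair of runs of $\mathcal{A}$ and $\mathcal{B}$ on $t$, and by the pigeonhole principle the infinity set of a path of the product run projects onto the infinity sets of the two component paths, so $L(\mathcal{C}_2)=L(\mathcal{A})\cap L(\mathcal{B})$. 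All of this is visibly effective.

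For \emph{complementation}, I would first replace $\mathcal{A}$ by an equivalent nondeterministic \emph{parity} tree automaton by the standard latest-appearance-record conversion of the Muller condition into a parity condition, which only enlarges the state set; so assume $\mathcal{A}$ is a parity automaton. To each tree $t\in T_\Si^\om$ I attach the membership game $G(\mathcal{A},t)$ between \emph{Automaton} (Eve) and \emph{Pathfinder} (Adam): from a position $(u,q)$ Eve chooses a transition $(q,t(u),q_l,q_r)\in\Delta_{\mathcal A}$, then from $(u,q_l,q_r)$ Adam chooses a direction $d\in\{l,r\}$, and the play moves to $(ud,q_d)$; Eve wins an infinite play iff the sequence of states visited satisfies the parity condition. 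A standard unravelling argument gives $t\in L(\mathcal{A})$ iff Eve has a winning strategy in $G(\mathcal{A},t)$. Now $G(\mathcal{A},t)$ is a parity game, hence positionally determined (Emerson--Jutla, Mostowski): exactly one player wins, and with a positional strategy. Therefore $t\notin L(\mathcal{A})$ iff Adam has a \emph{positional} winning strategy $\sigma$; since $\mathcal{A}$ has only finitely many transitions, at each node $u$ such a $\sigma$ makes a choice out of a fixed finite set $D$, so $\sigma$ is coded by a labelling $s:\{l,r\}^\star\to D$.

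I would then take $\mathcal{C}_3$ over $\Si$ to be the automaton which, reading $t$, nondeterministically guesses such a labelling $s$ node by node and \emph{verifies} that the guessed positional strategy is winning for Adam, i.e. that no play of $G(\mathcal{A},t)$ consistent with $\sigma$ is won by Eve. With $\sigma$ fixed, a $\sigma$-consistent play is determined by Eve's successive transition choices, which at each node range over the finitely many transitions of $\mathcal{A}$; the set of $\mathcal{A}$-states reachable at a node along $\sigma$-consistent plays is tracked by a subset construction inside the states of $\mathcal{C}_3$, and the requirement ``along no $\sigma$-consistent branch does Eve satisfy the parity condition'' is a condition on the \emph{paths} of the run of $\mathcal{C}_3$ --- namely the complementary-index parity condition, hence (after passing back to Muller form) a legitimate Muller acceptance condition. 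This yields a Muller tree automaton $\mathcal{C}_3$ with $L(\mathcal{C}_3)=T_\Si^\om\setminus L(\mathcal{A})$, and every step is effective. The main obstacle, as expected, is precisely this complementation, and within it the verification automaton: one must simultaneously quantify \emph{universally} over Eve's strategies against the fixed Adam-strategy and translate ``Eve never wins'' into a path condition a nondeterministic Muller tree automaton can enforce top-down --- it is here that the determinization-flavoured subset bookkeeping along branches, together with the index complementation of the parity condition, carries the real weight, whereas the union and intersection constructions are immediate.
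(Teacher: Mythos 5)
First, a point of context: the paper does not prove this theorem at all --- it is quoted as Rabin's classical result with references to \cite{Rabin69,Thomas90,2001automata,PerrinPin} --- so there is no proof of the paper's to compare yours with; what you have written is an attempt to reprove Rabin's complementation theorem. Your constructions for union and intersection are correct and complete: the disjoint-union automaton with a fresh root state and the product automaton with the acceptance family defined through the two projections do exactly what you claim (the pigeonhole observation that the infinity set of a path of the product run projects onto the infinity sets of the component paths is the right justification). Your complementation argument also starts along the standard modern route, the one taken in the cited Gr\"adel--Thomas--Wilke volume: pass to a parity condition, introduce the membership game between Automaton and Pathfinder, invoke positional determinacy of parity games (valid here even though the arena is infinite, since the number of priorities is finite), and have the complement automaton guess Pathfinder's positional strategy as an extra labelling of the input tree.

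The genuine gap is in the verification step, which is precisely where the weight of Rabin's theorem sits. Once the strategy labelling $s$ is guessed, the automaton $\mathcal{C}_3$ must check that \emph{no} play consistent with $s$ satisfies Eve's parity condition. Along a fixed branch of the tree, the plays consistent with $s$ that follow that branch are exactly the runs of a nondeterministic parity \emph{word} automaton reading the branch labels together with $s$; so the condition to be enforced on each path of the run of $\mathcal{C}_3$ is ``this nondeterministic word automaton has no accepting run along this branch'', i.e.\ membership of the branch in the \emph{complement} of a nondeterministically recognized $\om$-word language. Your proposal to handle this by ``a subset construction inside the states of $\mathcal{C}_3$'' together with ``the complementary-index parity condition'' does not work as stated: tracking the set of reachable states cannot detect whether \emph{some} run satisfies a parity condition (this is exactly why determinization of B\"uchi automata requires Safra trees rather than subsets), and flipping the parity index of a nondeterministic automaton does not complement it. What is needed at this point is McNaughton's determinization theorem for $\om$-word automata (or, equivalently, complementation of nondeterministic parity/B\"uchi word automata via Safra's construction), applied to the branch automaton; the resulting deterministic Muller word automaton can then be run along all branches simultaneously, yielding a legitimate Muller path condition for $\mathcal{C}_3$. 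Without importing that ingredient --- which is a theorem of comparable depth, not a routine bookkeeping device --- the complementation argument is incomplete.
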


\section{Automatic structures}
\noi Notice that one can  consider a relation $R \subseteq \Si_1^{\om^n} \times \Si_2^{\om^n}  \times \ldots \times \Si_k^{\om^n}$,
where $\Si_1, \Si_2, \ldots \Si_k$, are  finite alphabets,
as an $\om^n$-language over the product alphabet $\Si_1 \times \Si_2  \times \ldots \times \Si_k$.  In a similar way, we can consider a relation
$R \subseteq T_{\Si_1}^\om \times T_{\Si_2}^\om  \times \ldots \times T_{\Si_k}^\om$,
as a tree language over the product alphabet $\Si_1 \times \Si_2 \times \ldots \times \Si_k$.

Let now $\mathcal{M}=(M, (R_i^M)_{1\leq i\leq k})$  be  a relational structure,
  where $M$ is the domain,  and for each $i\in [1, k]$ ~ $R_i^M$ is a relation
of finite arity $n_i$ on the domain $M$. The structure is said to be  $\om^n$-automatic (respectively,  $\om$-tree-automatic)
if there is a presentation of the structure
where the domain and the relations on the domain are accepted by $\om^n$-automata (respectively, by Muller tree automata), in the following sense.

\begin{Deff}[see \cite{Blumensath99}]
Let $\mathcal{M}=(M, (R_i^M)_{1\leq i\leq k})$ be a relational structure, where $k\geq 1$ is an integer,  and each relation $R_i$ is of finite arity $n_i$.
\nl An  $\om$-tree-automatic presentation of the structure $\mathcal{M}$  is formed by   a tuple of  Muller tree  automata
$(\mathcal{A}, \mathcal{A}_=,  (\mathcal{A}_i)_{1\leq i\leq k})$,  and a mapping $h$ from $L(\mathcal{A})$ onto $M$,  such that:
\begin{enumerate}
\ite The automaton $\mathcal{A}_=$ accepts
an equivalence relation $E_\equiv $  on $L(\mathcal{A})$,  and
\ite
For each $i \in [1, k]$, the automaton $\mathcal{A}_i$ accepts an $n_i$-ary relation $R'_i$ on
$L(\mathcal{A})$ such that $E_\equiv$ is compatible with $R'_i$, and
\ite   The mapping $h$ is an isomorphism from the quotient  structure \nl $( L(\mathcal{A}),  (R'_i)_{1 \leq i \leq k} ) / E_\equiv$ onto $\mathcal{M}$.
\end{enumerate}

\noi  The $\om$-tree-automatic presentation is said to be injective if the equivalence relation $E_\equiv $ is just the equality relation on
$L(\mathcal{A})$. In this case  $\mathcal{A}_=$ and $E_\equiv $  can be omitted and
$h$ is simply an isomorphism from $( L(\mathcal{A}),  (R'_i)_{1 \leq i \leq k} )$ onto $\mathcal{M}$.
\noi A relational structure is said to be (injectively) $\om$-tree-automatic if it has an (injective) $\om$-tree-automatic presentation.
\end{Deff}

 Notice that sometimes an  $\om$-tree-automatic presentation is only given by a tuple of Muller tree  automata
$(\mathcal{A}, \mathcal{A}_=,  (\mathcal{A}_i)_{1\leq i\leq k})$, i.e. {\it without the mapping} $h$. In that case we still get the
$\om$-tree-automatic structure $( L(\mathcal{A}),  (R'_i)_{1 \leq i \leq k})  / E_\equiv$ which is in fact  equal to  $\mathcal{M}$  up to isomorphism.

 We get the definition of $\om^n$-automatic (injective) presentation of a  structure and of  $\om^n$-automatic structure  by simply replacing 
Muller tree  automata by $\om^n$-automata in the above definition.

 Notice  that, due to the good decidability properties of  Muller tree  automata and of  $\om^n$-automata, we can decide whether a given  automaton
$\mathcal{A}_=$ accepts
an equivalence relation $E_\equiv $  on $L(\mathcal{A})$ and whether, for each $i \in [1, k]$,  the
automaton $\mathcal{A}_i$ accepts an $n_i$-ary relation $R'_i$ on
$L(\mathcal{A})$ such that $E_\equiv$ is compatible with $R'_i$.

We denote
$\om^n$-AUT the class of $\om^n$-automatic structures 
and $\om$-tree-AUT the class of $\om$-tree-automatic structures.

\hsn  We state now two important properties of automatic structures.

\begin{The}[see \cite{Blumensath99}]
The class of  $\om$-tree-automatic  (respectively,  $\om^n$-automa \newline -tic) structures is closed under first-order interpretations. In other words  if
$\mathcal{M}$ is an $\om$-tree-automatic  (respectively,  $\om^n$-automatic) structure and $\mathcal{M}'$ is a relational structure which is
first-order interpretable in the structure $\mathcal{M}$, then the structure  $\mathcal{M}'$ is also $\om$-tree-automatic  (respectively,  $\om^n$-automatic).
\end{The}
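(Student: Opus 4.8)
\noi The plan is to reduce the theorem to a \emph{definability lemma}: if $\mathcal{M}$ has an $\om$-tree-automatic (respectively, $\om^n$-automatic) presentation with domain automaton $\mathcal{A}$ over an alphabet $\Si$ and coding map $h$, then for every first-order formula $\vp(x_1,\dots,x_m)$ in the language of $\mathcal{M}$ the relation $R_\vp := \{(w_1,\dots,w_m) \in L(\mathcal{A})^m : \mathcal{M}\models\vp[h(w_1),\dots,h(w_m)]\}$, read as a language over the product alphabet $\Si^m$ (this convolution requires \emph{no padding}, since all codes are trees on $\{l,r\}^\star$, respectively words of length exactly $\om^n$), is accepted by a Muller tree automaton (respectively, an $\om^n$-automaton). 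Granting the lemma one finishes in the standard way: a $d$-dimensional first-order interpretation of $\mathcal{M}'$ in $\mathcal{M}$ is given by a domain formula $\delta(\bar x)$, a formula $\varepsilon(\bar x,\bar y)$ defining an equivalence relation on $\delta^{\mathcal{M}}$, and, for each $i$, a formula $\vp_i$ compatible with $\varepsilon$ (all possibly with parameters from $M$, which one handles by substituting fixed codes for them); applying the lemma to $\delta$, to $\varepsilon$, and to each $\vp_i$ (intersecting with the appropriate copies of the domain automaton), one obtains a tuple of Muller tree automata (respectively, $\om^n$-automata) over $\Si^d$ forming a presentation of $\mathcal{M}'$ --- compatibility with the original equivalence $E_\equiv$ is automatic because these relations are pulled back along $h$, which factors through $E_\equiv$. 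The induced quotient structure is then isomorphic to $\mathcal{M}'$, and the side conditions (that the new equality automaton accepts an equivalence compatible with the new relations) are decidable by the remarks recorded after the definition of automatic presentations, so nothing extra is needed.

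\noi First I would prove the lemma by induction on $\vp$. For atomic $\vp$ we use the automata $\mathcal{A}_=$ and $\mathcal{A}_i$ supplied by the presentation, together with the observation that inserting dummy coordinates, permuting coordinates, and diagonally identifying coordinates of a convolution all preserve recognizability: over a product alphabet these are letter-to-letter operations, realised by relabelling plus nondeterministic guessing of the new coordinates. For the Boolean connectives I would invoke closure under intersection, union and complementation --- Theorem~\ref{reg} (B\"uchi--Siefkes, Bedon) in the $\om^n$-case and the Rabin complementation theorem for regular tree languages quoted above in the $\om$-tree case --- taking ``$\neg$'' relative to $L(\mathcal{A})^m$, i.e. first intersecting with the set of genuine codes. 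For $\exists x_j\,\psi$ I would intersect $R_\psi$ with the condition that the $j$-th coordinate lies in $L(\mathcal{A})$ and then project away the $j$-th factor of the product alphabet; this projection preserves recognizability because a nondeterministic Muller tree automaton (respectively, a nondeterministic $\om^n$-automaton) can guess the labels of the erased coordinate on the fly while simulating the given automaton, the limit transition relation $\gamma$ being unaffected. Finally $\fa x_j\,\psi$ is treated as $\neg\exists x_j\,\neg\psi$.

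\noi The hard part is already done for us: the only deep ingredients are the two complementation theorems --- Rabin's theorem for $\om$-tree automata and Theorem~\ref{reg} for $\om^n$-automata (for the $\om^n$-case one could alternatively argue through the substitution characterization of Proposition~\ref{sub}, but the direct closure statement is cleaner to use). Everything else --- the padding-free convolution, the cylindrification steps, projection by guessing, relativisation of quantifiers to $L(\mathcal{A})$, and substitution of codes for parameters --- is routine bookkeeping. In particular the chief technical nuisance in the corresponding statement for ordinary finite-word automatic structures, namely aligning words of different lengths, simply does not arise here, so the argument is essentially a transcription of Blumensath's proof in \cite{Blumensath99} with the closure properties of $\om^n$-automata and of Muller tree automata inserted in place of the finite-automaton ones.
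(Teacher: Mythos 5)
The paper gives no proof of this theorem: it is quoted from Blumensath's thesis \cite{Blumensath99}. Your argument is exactly the standard one used there (and for automatic structures generally): prove by induction on formulas that every first-order definable relation, read on codes via the convolution (which indeed needs no padding for infinite trees or for words of length exactly $\omega^n$), is recognizable, using the presentation automata for the atomic case, closure under Boolean operations (Theorem \ref{reg}, resp.\ Rabin complementation) for connectives, and projection by nondeterministic guessing for $\exists$ (the limit relation $\gamma$ is indeed untouched); then pull back the interpretation's domain, equivalence and relation formulas. This is correct and complete for the statement as it is meant, i.e.\ for parameter-free interpretations. The one claim you should not make casually is the parenthetical handling of parameters ``by substituting fixed codes for them'': fixing a coordinate of a regular relation at an arbitrary code does not in general preserve recognizability --- the section of the equality relation at a tree that is not regular (or at an $\omega^n$-word whose singleton is not a regular $\omega^n$-language) is a non-regular singleton --- so this step is only legitimate when the parameter admits such a ``regular'' code; for arbitrary parameters the pullback construction breaks down, and the theorem should be read, as usual, for interpretations without parameters.
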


\begin{The}[see \cite{Hodgson,Blumensath99}]\label{dec}
The first-order theory of an $\om$-tree-automatic (respectively,  $\om^n$-automatic) structure is decidable.
\end{The}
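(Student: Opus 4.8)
\noindent The plan is to follow the classical argument for automatic structures (as in \cite{Hodgson,Blumensath99}), adapted to the present setting. Fix an $\om^n$-automatic (respectively, $\om$-tree-automatic) presentation $(\mathcal{A}, \mathcal{A}_=, (\mathcal{A}_i)_{1\leq i\leq k})$ of $\mathcal{M}$, with $L(\mathcal{A}) \subseteq \Si^{\om^n}$ (respectively, $L(\mathcal{A})\subseteq T_\Si^\om$). Given a first-order formula $\vp(x_1,\ldots,x_m)$ in the language of $\mathcal{M}$, I would construct, by induction on the structure of $\vp$, an $\om^n$-automaton (respectively, a Muller tree automaton) $\mathcal{A}_\vp$ over the product alphabet $\Si^m$ accepting exactly those tuples $(w_1,\ldots,w_m)\in L(\mathcal{A})^m$ of representatives for which the quotient structure satisfies $\vp$ at $([w_1],\ldots,[w_m])$, where $[w]$ denotes the $E_\equiv$-class of $w$. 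A tuple of $\om^n$-words all of length $\om^n$, respectively of full binary trees, is read as a single word over $\Si^m$, so no padding or length-matching is needed: this is where working with fixed-length ordinal words, or with full binary trees, makes the construction cleaner than in the finite-word case.

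For the base cases, the atomic formula $x_i=x_j$ is interpreted by the relation accepted by $\mathcal{A}_=$ on the relevant two coordinates, and an atomic formula $R_\ell(x_{i_1},\ldots)$ by the relation $R'_\ell$ accepted by $\mathcal{A}_\ell$; in each case I would intersect with the regular language forcing every coordinate to lie in $L(\mathcal{A})$, so as to maintain the invariant that $\mathcal{A}_\vp$ accepts only tuples of valid representatives. By hypothesis $E_\equiv$ is compatible with each $R'_\ell$, and trivially with itself, so the relation recognised at each base step is $E_\equiv$-invariant in every coordinate.

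For the inductive step, conjunction and disjunction are handled by closure of regular $\om^n$-languages under finite intersection and finite union (Theorem~\ref{reg}), respectively by Rabin's theorem for regular tree languages. Negation $\neg\psi$ is handled by complementation relative to the valid tuples: complement $L(\mathcal{A}_\psi)$ in $(\Si^m)^{\om^n}$ (respectively, in $T_{\Si^m}^\om$), then intersect with $L(\mathcal{A})^m$. Existential quantification $\exists x_m\,\psi(x_1,\ldots,x_m)$ corresponds to the letter-to-letter projection of $L(\mathcal{A}_\psi)$ that forgets the $m$-th component of the product alphabet; this preserves regularity because the automaton model is nondeterministic (the automaton simply guesses the forgotten component), and the projected relation remains $E_\equiv$-invariant by the compatibility just noted. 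Universal quantification is treated as $\neg\exists\neg$. All these constructions are effective, by the effectiveness clauses of the cited closure theorems.

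Applied to a sentence $\vp$, this yields an $\om^n$-automaton (respectively, a Muller tree automaton) $\mathcal{A}_\vp$ over a one-letter alphabet with $\mathcal{M}\models\vp$ if and only if $L(\mathcal{A}_\vp)\neq\emptyset$. One then invokes decidability of the emptiness problem: the B\"uchi--Siefkes theorem quoted above for $\om^n$-automata, and the classical decidability of emptiness for Muller/Rabin tree automata. Since $\mathcal{A}_\vp$ is obtained effectively from $\vp$ and a fixed presentation of $\mathcal{M}$, this decides whether $\vp$ holds, so the whole first-order theory of $\mathcal{M}$ is decidable. The only point requiring care, rather than a genuine obstacle, is the bookkeeping around the equivalence relation --- verifying at each step that the relation recognised by $\mathcal{A}_\vp$ is $E_\equiv$-invariant in each free variable, which makes the quotient semantics well defined --- together with the observation that quantification never leads outside the class of regular languages, i.e.\ that we only ever use the easy, nondeterministic direction of the closure theory.
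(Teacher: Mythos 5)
Your proof is correct and is exactly the classical argument that the paper relies on: the paper states this theorem without proof, citing \cite{Hodgson,Blumensath99}, and your induction on formulas (closure under boolean operations via Theorem~\ref{reg} and Rabin's theorem, projection for quantifiers using nondeterminism, then the emptiness test via B\"uchi--Siefkes and Rabin) is precisely that standard construction. The only detail worth making explicit is that quantification over the quotient structure reduces to quantification over representatives because $h$ is onto, which together with your $E_\equiv$-invariance bookkeeping makes the quotient semantics match the automaton semantics.
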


 Notice that $\om$-tree-automatic (respectively,   $\om^n$-automatic) structures are always {\it relational} structures.
 However we can also consider structures equipped with
functional operations like groups, by replacing as usually a $p$-ary function by its graph which is a $(p+1)$-ary relation.
This will always be the case in the sequel where all structures are viewed as relational  structures.

  Some examples of  $\om$-automatic structures can be found in
\cite{RubinPhd,NiesBSL,KNRS,KhoussainovR03,BlumensathGraedel04,KuskeLohrey,HjorthKMN08,KuskeLL-CSL10}.

 A first one is  the boolean algebra $\mathcal{P}(\om)$ of subsets of $\om$. 

 The additive group $(\mathbb{R}, +)$ is $\om$-automatic, as is the product $(\mathbb{R}, +) \times (\mathbb{R}, +)$.

 Assume that a finite alphabet $\Si$ is linearly ordered.
Then the  set   $(\Si^{\om^n}, \leq_{lex})$ of
 $\om^n$-words over the alphabet $\Si$, equipped with the lexicographic ordering, is  $\om^n$-automatic.

 Is is easy to see that every (injectively) $\om$-automatic structure is also
(injectively) $\om$-tree-automatic. Indeed a Muller tree automaton can easily simulate a B\"uchi automaton on the leftmost branch of an infinite  tree.

The inclusions $\om^n$-AUT $ \subseteq$ $\om^{n+1}$-AUT,  $n\geq 1$, are  straightforward  to prove.

\begin{Pro}
For each integer $n\geq 1$, ~~~ $\om^n$-AUT $\subseteq  \om$-tree-AUT.
\end{Pro}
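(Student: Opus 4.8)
The plan is to show directly that any $\om^n$-automatic presentation can be converted into an $\om$-tree-automatic one, i.e.\ that an $\om^n$-automaton can be simulated by a Muller tree automaton running on a tree that codes an $\om^n$-word. The key idea is that a word of length $\om^n$ can be viewed as a well-founded labelling of (an initial segment of) the full binary tree: one fixes an embedding of the ordinal $\om^n$, with its natural order topology on limit points, into the branch structure of the infinite binary tree, so that the sequential reading of the $\om^n$-word (with its successor transitions $\Delta$ and its limit transitions $\gamma$ reading the set of cofinally occurring states) becomes a path through the tree whose Muller acceptance condition mimics the cofinality condition at limit stages. I would carry this out in three steps.

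First, I would make precise the coding of $\om^n$-words by infinite binary trees. Using the associativity $\om^n = \om \cdot \om^{n-1}$ and the substitution characterisation of Proposition~\ref{sub}, it suffices to handle the single step ``$\om$-tree-automatic is closed under the operation of replacing each letter of a regular $\om$-language by a regular $\om$-tree-automatic language'', and then iterate $n$ times. Concretely, I would encode an $\om$-sequence of subtrees by threading them down the right spine of the binary tree: the subtree hanging to the left of the $k$-th right-spine node carries the $k$-th component, and the right spine itself carries the top-level $\om$-word. This is the standard way an infinite tree codes an $\om$-indexed family of trees, and it is clearly regular-preserving.

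Second, I would build the Muller tree automaton. On the right spine it runs (a tree-automaton version of) a Muller automaton for the top-level regular $\om$-language $R$; whenever it spawns a left subtree, it launches a copy of the inductively-obtained $\om$-tree automaton for the corresponding language $L_a$. The delicate point is propagating, at each limit stage, the information ``which states occurred cofinally below'' — but this is exactly what a Muller (parity/Rabin) condition is designed to track along a branch, and the closure theorems of Section~2 (Theorem on closure of regular tree languages under the Boolean operations, together with Rabin's theorem) let me assemble the required automaton, since the class of regular tree languages is closed under the projections and Boolean operations involved in the substitution construction. The base case $n=1$ is already noted in the excerpt: a Muller tree automaton simulates a B\"uchi automaton on the leftmost branch, and $\om$-regular equals $\om$-Muller.

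Third, I would check that the construction is uniform across the three automata of a presentation — the domain automaton $\mathcal A$, the equality automaton $\mathcal A_=$, and each relation automaton $\mathcal A_i$ — and that it respects the product-alphabet encoding of relations, so that an $\om^n$-automatic presentation $(\mathcal A, \mathcal A_=, (\mathcal A_i)_{1\le i\le k})$ is transported to a genuine $\om$-tree-automatic presentation of the same structure. The main obstacle I expect is the bookkeeping at limit ordinals of the form $\om^m$ for $1<m<n$: here the reading ``jumps'' several levels of the coding tree at once, and one must verify that the Muller condition on the coding branch correctly recovers the set $\mathrm{Inf}(x,i)$ demanded by the transition relation $\gamma$ of the $\om^n$-automaton. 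All the rest is routine once the coding and the inductive step are set up cleanly.
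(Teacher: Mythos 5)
Your proposal is correct and follows essentially the same route as the paper: an inductive coding of $\om^n$-words as infinite binary trees along a spine (the paper hangs the block-subtrees off the leftmost branch rather than the rightmost, which is immaterial), with Proposition~\ref{sub} supplying the inductive step that a regular $\om^{n+1}$-language is a substitution of regular $\om^n$-languages into a regular $\om$-language, and then transporting the domain, equality and relation automata of the presentation. The worry you raise about tracking $\mathrm{Inf}(x,i)$ at higher limit stages is in fact absorbed by that substitution characterisation, exactly as in the paper, so no extra bookkeeping is needed.
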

  
\begin{proof} 
 We are first going to associate a tree $t^x$ to each $\om^n$-word $x$ in such a way that if $L \subseteq \Si^{\om^n}$ is a regular $\om^n$-language
then the tree language $\{ t^x \in T_\Si^\om \mid x\in L \}$ will be also  regular.
We make this by induction on the integer $n$. Let then $\Si$ be a finite alphabet and $a\in \Si$ be
a distinguished letter in $\Si$. We begin with the case $n=1$. If   $x\in \Sio$ is an $\om$-word over the alphabet $\Si$ then
$t^x$ is the tree in $T_\Si^\om$ such that $t^x(l^k)=x(k)$ for every integer $k\geq 0$ and $t^x(u)=a$ for every word $u\in \{l, r\}^\star$ such that
$u \notin \{ l^k \mid k\geq 0 \}$. It is clear that if $L \subseteq \Sio$ is a regular $\om$-language  then the tree language $\{ t^x \in T_\Si^\om \mid x\in L \}$
is a regular set of trees.
 Assume now that we have associated, for a given integer $n\geq 1$,  a tree $t^x$ to each $\om^n$-word $x \in \Si^{\om^n}$ in such a way that if
$L \subseteq \Si^{\om^n}$ is a regular $\om^n$-language
then the tree language $\{ t^x \in T_\Si^\om \mid x\in L \}$ is also  regular.  Consider now an $\om^{n+1}$-word $x$ over $\Si$. It can be divided into
$\om$ subwords $x_j$, $0\leq j <\om$, of length $\om^n$.  By induction hypothesis to each $\om^n$-word $x_j$ is associated a tree $t^{x_j} \in T_\Si^\om$.
Recall that we denote by $t_u$ the subtree of $t$ which is rooted in $u$.
We can now associate to the $\om^{n+1}$-word $x$ the tree $t^x$ which is defined by: $t^x_{l^k\cdot r }=t^{x_k}$ for every integer $k\geq 0$, and
$t^x(l^k)=a$ for every integer $k\geq 0$.
Let then now $L \subseteq \Si^{\om^{n+1}}$ be a regular $\om^{n+1}$-language. By  Proposition \ref{sub} the language $L$ is obtained from a regular
$\om$-language  $R \subseteq \Ga^\om$  by substituting in every $\om$-word $\sigma \in R$
a regular  $\om^{n}$-language $L_b \subseteq \Si^\om$ to each letter $b\in \Ga$.  By induction hypothesis for each letter $b\in \Ga$  there is a tree
automaton $\mathcal{A}_{b}$ such that  $L(\mathcal{A}_b)=\{ t^x \in T_\Si^\om \mid x\in L_b \}$. This implies easily that one can construct, from
a B\"uchi automaton accepting the regular $\om$-language  $R$ and from the tree automata  $\mathcal{A}_{b}$, $b\in \Ga$, 
another tree automaton $\mathcal{A}$ such that
$L(\mathcal{A})=\{ t^x \in T_\Si^\om \mid x\in L \}$.

 The  inclusion $\om^n$-AUT $\subseteq  \om$-tree-AUT holds because any element of the domain of an
$\om^n$-automatic structure, represented by an $\om^n$-word $x$, can also be  represented by a tree $t^x$. The relations
of the structure are then also presentable by tree automata. 
 \end{proof}

Notice that the strictness of the inclusion $\bigcup_{n\geq1}\om^n\mbox{-AUT} \subsetneq \om\mbox{-tree-AUT}$ follows easily from the 
existence of an $\om$-tree-automatic structrure without Borel presentation, proved in   \cite{HjorthKMN08}, and the fact that every 
$\om^n$-automatic structrure has a Borel presentation (see the end of Section 2.1). 

On the other hand we can easily see that the inclusion   $\om$-AUT $\subsetneq  \om^2$-AUT is strict by considering ordinals. 
Firstly, Kuske  recently  proved  in  \cite{Kuske10} that the $\om$-automatic ordinals are  the ordinals smaller than $\om^\om$. 
Secondly,  it is easy to see that the ordinal $\om^\om$ is $\om^2$-automatic. 
The ordinal $\om^\om$ is the order-type of finite sequences of integers ordered by
(1) increasing length of sequences and (2) lexicographical order for sequences of integers of the same length $n$.
A finite sequence of integers $x=(n_1, n_2, \ldots , n_p)$ can be represented by the following $\om^2$-word $\alpha_x$ over the alphabet $\{a, b\}$:
$$\alpha_x = (a^{n_1+1} \cdot b^\om) \cdot (a^{n_2+1} \cdot b^\om) \cdots  (a^{n_p+1} \cdot b^\om) \cdot (b^\om) \cdot (b^\om) \cdots $$
\noi it is then easy to see that there is an $\om^2$-automaton accepting exactly the $\om^2$-words of the form $\alpha_x$ for a finite sequence of integers $x$.
Moreover there is an  $\om^2$-automaton recognizing the pairs $(\alpha_x, \alpha_{x'})$ such that $x<x'$.

\section{Some $\om^n$-automatic boolean algebras}

\noi We have seen that the boolean algebra $\mathcal{P}(\om)$ of subsets of $\om$ is $\om$-automatic.
Another  known example of $\om$-automatic  boolean algebra is the  boolean algebra
$\mathcal{P}(\om)/{\rm Fin}$ of subsets of $\om$ modulo {\it finite sets}.
The set ${\rm Fin}$ of finite subsets of $\om$ is  an ideal of  $\mathcal{P}(\om)$, i.e.  a subset of the powerset
of $\om$ such that:
\begin{enumerate}
\ite $\emptyset \in {\rm Fin}$ and $\om \notin {\rm Fin}$.
\ite For all $B, B' \in {\rm Fin}$, it holds that $B \cup B' \in {\rm Fin}$.
\ite For all $B, B' \in \mathcal{P}(\om)$, if $B \subseteq B'$ and $B' \in {\rm Fin}$ then $B \in {\rm Fin}$.
\end{enumerate}

For any two subsets $A$ and $B$ of $\om$ we denote $A \Delta B$ their symmetric difference.
Then the relation $\approx$ defined by:
``$A \approx B$ iff the symmetric difference $A \Delta B$ is finite" is an equivalence relation on $\mathcal{P}(\om)$.
The quotient
$\mathcal{P}(\om) / \approx $ denoted  $\mathcal{P}(\om) / {\rm Fin}$ is a boolean algebra.
It is easy to see that this boolean algebra is $\om$-automatic, see for example \cite{KuskeLohrey,HjorthKMN08,Fin-Tod}.

 More generally we  now consider the boolean algebras  $\mathcal{P}(\om^n)/I_{\om^n}$ for integers $n\geq 1$.
We first give the definition of the sets  $I_{\om^n}\subseteq \mathcal{P}(\om^n)$.  For $P \subseteq \om^n$ we denote
$o.t.(P)$ the order type of $(P, <)$ as a suborder of  the order $(\om^n, <)$.  The set $I_{\om^n}$ is defined by:
$$I_{\om^n}=\{ P \subseteq \om^n \mid o.t.(P) < \om^n \}.$$

  For each integer $n\geq 1$ the set $I_{\om^n}$ is an  ideal of  $\mathcal{P}(\om^n)$.

For any two subsets $A$ and $B$ of $\om^n$ we denote $A \Delta B$ their symmetric difference.
Then the relation $\approx_n$ defined by:
``$A \approx_n B$ iff the symmetric difference $A \Delta B$ is in $I_{\om^n}$" is an equivalence relation on $\mathcal{P}(\om^n)$.
The quotient
$\mathcal{P}(\om^n) / \approx_n $, also denoted  $\mathcal{P}(\om^n)/I_{\om^n}$,  is a boolean algebra.

We are going to show that this boolean algebra $\mathcal{P}(\om^n)/I_{\om^n}$ is $\om^n$-automatic.

 We first notice that each  set $P \subseteq \om^n$ can be represented by an $\om^n$-word $x_P$  over the alphabet $\{0, 1\}$ by setting
$x_P(\alpha)=1$ if and only if $\alpha \in P$ for every ordinal $\alpha < \om^n$. Let then

$$L_n = \{ x_P \in \{0, 1\}^{\om^n} \mid  P \in I_{\om^n} \}.$$

\begin{The}\label{ln}  Let $n\geq 1$ be an integer. Then the set $L_n \subseteq \{0, 1\}^{\om^n}$ is a regular  $\om^n$-language.
\end{The}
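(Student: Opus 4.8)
The plan is to argue by induction on $n$, using Proposition \ref{sub} to reduce the case $n+1$ to the cases $\om$ and $\om^n$. First consider the base case $n=1$. Here $I_\om$ is just the collection of finite subsets of $\om$, so $L_1 = \{x_P \in \{0,1\}^\om \mid P \text{ is finite}\}$, which is the classical $\om$-regular language of $\om$-words over $\{0,1\}$ containing only finitely many $1$'s; a B\"uchi automaton with a single accepting state that it leaves whenever it reads a $1$ and stays in forever once no more $1$'s occur witnesses regularity. For the inductive step, suppose $L_n$ is a regular $\om^n$-language, and consider an $\om^{n+1}$-word $x_P$ for $P \subseteq \om^{n+1}$. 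Write $\om^{n+1} = \sum_{j<\om} \om^n$, so $x_P$ splits canonically into $\om$ blocks $x_P = y_0 y_1 y_2 \cdots$, each $y_j \in \{0,1\}^{\om^n}$ coding the ``$j$-th slice'' $P_j = P \cap [\,\om^n\cdot j,\ \om^n\cdot(j+1)\,)$, re-indexed back to $\om^n$.

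The key combinatorial observation I would isolate is a characterization of when $o.t.(P) < \om^{n+1}$ in terms of the slices $P_j$. Using Cantor normal form / the fact that the order type of a sum $\sum_{j<\om}\alpha_j$ of countable ordinals is $<\om^{n+1}$ iff all but finitely many $\alpha_j$ are $0$ \emph{and} every $\alpha_j < \om^{n+1}$ --- but here each $\alpha_j = o.t.(P_j) \le \om^n$ automatically, so the condition simplifies dramatically: $o.t.(P) = \sum_{j<\om} o.t.(P_j) < \om^{n+1}$ if and only if $o.t.(P_j) < \om^n$ for every $j$ (so that the $j$-th block's contribution is a genuine ordinal below $\om^n$, not a full copy of $\om^n$) and moreover only finitely many of the $o.t.(P_j)$ equal their possible ``large'' values — but actually a cleaner statement is: since $\sum_{j<\om}\beta_j$ with $\beta_j\le\om^n$ has order type $\om^n$ as soon as infinitely many $\beta_j$ are $\ge$ some fixed $\om^{n-1}\cdot k$ cofinally, I would state it as: $o.t.(P)<\om^{n+1}$ iff $o.t.(P_j)<\om^n$ for all $j$ and the sequence $(o.t.(P_j))_j$ is eventually such that the partial sums stabilize below $\om^n$. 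The robust formulation that avoids fuss: $o.t.(P) < \om^{n+1}$ iff \emph{each} $P_j$ has order type $<\om^n$ (i.e. $x_{P_j} = y_j \in L_n$) \emph{and} $P$ meets only finitely many slices in a way forcing cofinal $\om^{n-1}$-chunks — I will phrase it precisely once the normal-form bookkeeping is done, but the upshot is that membership of $x_P$ in $L_{n+1}$ is determined by (a) each block $y_j$ lying in a fixed regular $\om^n$-language (built from $L_n$ and the finitely many ``order type below $\om^{n-1}\cdot k$'' sublanguages, all regular by the induction hypothesis and Theorem \ref{reg}), and (b) a regular constraint on the \emph{sequence} of ``block classes'', namely an $\om$-regular condition saying the heavy blocks occur only finitely often.

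Granting that characterization, the proof concludes via Proposition \ref{sub}: define a finite alphabet $\Gamma$ whose letters tag the finitely many relevant ``types'' of an $\om^n$-block (e.g. ``order type $\ge\om^{n-1}\cdot k$'' for $k\le$ some bound, ``order type $<\om^n$'', ``order type $=\om^n$''), let $R\subseteq\Gamma^\om$ be the $\om$-regular language of type-sequences for which the total order type stays below $\om^{n+1}$, and for each letter $b\in\Gamma$ let $L_b\subseteq\{0,1\}^{\om^n}$ be the corresponding regular $\om^n$-language of blocks of that type (regular by the induction hypothesis together with the closure properties of Theorem \ref{reg}, since ``$o.t.(P)<\om^{n-1}\cdot k$'' is a finite union/intersection manipulation of languages of the form $L_{n-1}$ shifted and concatenated). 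Then by Proposition \ref{sub} the substitution of $L_b$ for $b$ in $R$ yields exactly $L_{n+1}$, which is therefore a regular $\om^{n+1}$-language.

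The main obstacle I anticipate is getting the ordinal-arithmetic characterization in step two exactly right and in a form that is genuinely $\om$-regular in the block-types — in particular making sure that finitely many block-types suffice to detect ``$\sum_j o.t.(P_j) < \om^{n+1}$''. This is where Cantor normal form has to be pushed through carefully: $\sum_{j<\om}\beta_j$ (all $\beta_j\le\om^n$) is $<\om^{n+1}$ precisely when, writing each $\beta_j$ in the form $\om^{n-1}c_j^{(n-1)} + \cdots$, the coefficients behave so that carries do not accumulate to a full $\om^n$; the honest statement is that it suffices (and is necessary) that all but finitely many $\beta_j$ satisfy $\beta_j < \om^{n-1}$, and among those finitely many large ones each is $\le\om^n$ with at most one equal to $\om^n$ being disallowed, etc. Once this is pinned down, everything else is routine automaton bookkeeping justified by Theorem \ref{reg} and Proposition \ref{sub}.
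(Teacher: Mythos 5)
Your overall architecture (induction on $n$, the correct base case for $L_1$, decomposition of an $\om^{n+1}$-word into $\om$ slices of length $\om^n$, and an appeal to Proposition \ref{sub} together with Theorem \ref{reg}) is exactly the right frame, and it matches the paper's proof. The genuine gap is in the key combinatorial step, namely the characterization of $I_{\om^{n+1}}$ in terms of the slices $P_j=P\cap[\om^n\cdot j,\ \om^n\cdot(j+1))$: every precise statement you commit to there is false. You assert ``$o.t.(P)<\om^{n+1}$ iff $o.t.(P_j)<\om^n$ for every $j$ (plus further conditions)'', but $P=[0,\om^n)$ has one slice of full order type $\om^n$ and still lies in $I_{\om^{n+1}}$, so requiring all slices to be small is not necessary. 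Your closing ``honest statement'', that it is necessary that all but finitely many slices have order type $<\om^{n-1}$, is also false: for $n\geq 2$, taking every slice of order type $\om^{n-1}$ gives $\sum_j \beta_j=\om^{n-1}\cdot\om=\om^n<\om^{n+1}$. The correct criterion is much simpler and needs no Cantor-normal-form bookkeeping: since $\om^n$ is additively indecomposable, a finite sum of ordinals $<\om^n$ is again $<\om^n$; hence if only finitely many slices have order type exactly $\om^n$, then $o.t.(P)\leq\om^n\cdot K+\om^n<\om^{n+1}$ for a suitable finite $K$, while infinitely many full slices force $o.t.(P)\geq\om^n\cdot\om=\om^{n+1}$. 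So $P\in I_{\om^{n+1}}$ if and only if only finitely many slices fail to lie in $I_{\om^n}$.

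With that criterion the proof closes immediately, as in the paper: $L_{n+1}$ is obtained from the regular $\om$-language $\{0,1\}^\star\cdot 0^\om$ by substituting $L_n^-$ (the complement of $L_n$, regular by Theorem \ref{reg}) for the letter $1$ and $L_n$ for the letter $0$, so Proposition \ref{sub} gives regularity of $L_{n+1}$. Note also that the finer block types you envisage (``order type $\geq\om^{n-1}\cdot k$'' and the like) would be a liability even if they were needed: the induction hypothesis only yields regularity of $L_m$ for $m\leq n$, so regularity of those refined slice languages would require a separate argument. Fortunately, the indecomposability observation shows that the only distinction among slices that matters is ``in $I_{\om^n}$'' versus ``not in $I_{\om^n}$'', which is exactly what the induction hypothesis and closure under complementation provide.
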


\begin{proof}  We reason by induction on the integer $n$.  Firstly it is easy to see that $L_1$ is the set of $\om$-words over the alphabet
$\{0, 1\}$ having only finitely many letters $1$. It is a well known example of a regular $\om$-language, see \cite{PerrinPin,Thomas90}.
Notice that its complement $L_1^-=  \{0, 1\}^\om  \setminus L_1$        is then also regular since the class of regular $\om$-languages is closed
under complementation.

 We now assume that we have proved that for each integer $k \leq n$ the set  $L_k \subseteq \{0, 1\}^{\om^k}$ is a regular  $\om^k$-language.
In particular the language $L_n$ is a regular  $\om^n$-language. Moreover its complement $L_n^-=  \{0, 1\}^{\om^n}  \setminus L_n$
  is then also regular since the class of regular $\om^n$-languages is closed
under complementation.

Consider now a set $P \subseteq \om^{n+1}$. It is easy to see that $P$ belongs to $I_{\om^{n+1}}$ if and only if there are only finitely many
integers $k\geq 0$ such that $P \cap [ \om^n . k; \om^n .( k+1)[$  has order type $\om^n$.

Thus  the  $\om^{n+1}$-language  $L_{n+1}$  is obtained from the regular $\om$-language $L_1=\{0, 1\}^\star \cdot 0^\om$  by substituting
the $\om^n$-language $L_n^-$ to the letter $1$ and the $\om^n$-language $L_n$ to the letter $0$.
We can conclude, using Proposition \ref{sub}, that the $\om^{n+1}$-language  $L_{n+1}$ is regular.
\end{proof}

\hsn We can now state the following result.

\begin{The}  For every integer $n\geq 1$ the boolean algebra $\mathcal{P}(\om^n)/I_{\om^n}$ is $\om^n$-automatic.
\end{The}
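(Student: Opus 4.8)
The plan is to build an $\om^n$-automatic presentation of $\mathcal{P}(\om^n)/I_{\om^n}$ directly from the characteristic-word coding introduced just before Theorem~\ref{ln}. For a word $x\in\{0,1\}^{\om^n}$ write $P_x=\{\alpha<\om^n\mid x(\alpha)=1\}$, so that $x=x_{P_x}$ and every subset of $\om^n$ arises this way. I would take the domain automaton $\mathcal{A}$ to accept all of $\{0,1\}^{\om^n}$ (trivially a regular $\om^n$-language), and let $h\colon\{0,1\}^{\om^n}\to\mathcal{P}(\om^n)/I_{\om^n}$ send $x$ to the $\approx_n$-class of $P_x$; this $h$ is clearly onto.

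For the equality automaton $\mathcal{A}_=$ one must recognise $\approx_n$, i.e.\ the set of pairs $(x,y)$, viewed as a word over $\{0,1\}^2$, with $P_x\Delta P_y\in I_{\om^n}$. Since the characteristic word of $P_x\Delta P_y$ is obtained positionwise from $(x,y)$ by the map $(a,b)\mapsto a\oplus b$, the relation $\approx_n$ is the preimage of $L_n$ under this letter-to-letter recoding; an $\om^n$-automaton for it is obtained by running an $\om^n$-automaton for $L_n$ (which exists by Theorem~\ref{ln}) after the recoding, keeping the same successor and limit transitions. That $\approx_n$ is an equivalence relation is already recorded above, as $\mathcal{P}(\om^n)/\approx_n$ is a boolean algebra.

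I regard the boolean algebra as a relational structure with a ternary relation for $\vee$, a ternary relation for $\wedge$, a binary relation for complementation, and, if desired, unary relations $\{0^{\om^n}\}$, $\{1^{\om^n}\}$ for the constants. On $L(\mathcal{A})$ I take the $\approx_n$-saturated versions: e.g.\ $R'_\vee=\{(x,y,z)\mid P_z\approx_n P_x\cup P_y\}$, which as in the previous paragraph is the preimage of $L_n$ under the positionwise recoding $(a,b,c)\mapsto(a\vee b)\oplus c$, hence is a regular $\om^n$-relation; similarly for $\wedge$ (recoding $(a\wedge b)\oplus c$), for complementation (recoding $1\oplus a\oplus b$), and trivially for the two constants. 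Being defined in terms of $\approx_n$-classes, each $R'_i$ is automatically $\approx_n$-compatible; concretely, compatibility for $\vee$ follows from the congruence property $(P\cup Q)\Delta(P'\cup Q')\subseteq(P\Delta P')\cup(Q\Delta Q')$ together with the facts that $I_{\om^n}$ is closed under subsets and finite unions, and similarly for $\wedge$ and complement. Finally $h$ descends to an isomorphism of $(L(\mathcal{A}),(R'_i))/\approx_n$ onto $\mathcal{P}(\om^n)/I_{\om^n}$: it is a well-defined bijection because $x\approx_n y\iff P_x\Delta P_y\in I_{\om^n}\iff[P_x]=[P_y]$, and it respects the operations because $[P_x]\vee[P_y]=[P_x\cup P_y]$ and the analogous identities hold in the quotient algebra.

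There is no serious obstacle remaining: the one nontrivial input is the regularity of $L_n$, i.e.\ Theorem~\ref{ln}, and everything else is either a positionwise recoding of that automaton or a routine check that $I_{\om^n}$ being an ideal turns $\approx_n$ into a congruence. The only point needing a little care is to make the limit transitions of the auxiliary automata explicit so that they genuinely read words of length exactly $\om^n$. Alternatively, one can avoid constructing these automata by hand and instead observe that $\mathcal{P}(\om^n)/I_{\om^n}$ is first-order interpretable in the $\om^n$-automatic structure $(\{0,1\}^{\om^n},\,L_n,\,\oplus,\,\vee,\,\wedge,\,\neg)$ (the last four being the positionwise boolean relations, which are regular as local constraints), and invoke closure of $\om^n$-AUT under first-order interpretations.
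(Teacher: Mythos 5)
Your proposal is correct and follows essentially the same route as the paper: both use the characteristic-word coding of subsets of $\om^n$ with domain all of $\{0,1\}^{\om^n}$, obtain the equivalence relation and the boolean operations (viewed as relations) from the regularity of $L_n$ established in Theorem~\ref{ln}, and let $h$ induce the isomorphism onto the quotient algebra; your version merely spells out the positionwise recodings that the paper leaves as ``it follows easily.'' The only point to adjust is that the constants should be represented by the $\approx_n$-saturated languages (e.g.\ $L_n$ for ${\bf 0}$) rather than the singletons $\{0^{\om^n}\}$, $\{1^{\om^n}\}$, so that compatibility with $E_\equiv$ holds, or simply omitted since they are definable.
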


\begin{proof} Let $n\geq 1$ be an integer.
We denote $[A]_{I_{\om^n}}$, or simply  $[A]$ when there is no confusion from the context, 
the equivalence class of a set  $A \subseteq \om^n$ for the equivalence relation $\approx_n$.
Let $\Si=\{0, 1\}$ and $L(\mathcal{A})= \Si^{\om^n}$ and for any $x\in \Si^{\om^n}$,
$h(x)=[\{ \alpha < \om^n \mid x(\alpha)=1\}]$.
Then it follows easily
from the preceding Theorem \ref{ln} that $\{ (u, v) \in (\Si^{\om^n})^2  \mid h(u)=h(v)  \}$ is accepted by an $\om^n$-automaton.

 The operations    $\cap, \cup, \neg$,
of intersection, union, and complementation,   on  $\mathcal{P}(\om^n)/I_{\om^n}$ are defined by:
  $[B]   \cap [B'] = [B  \cap  B'] $,    $[B]   \cup [B'] = [B  \cup  B'] $,  and
 $\neg [B] = [\neg B] $, see \cite{Jech}.

 Thus  the operations  of intersection, union, (respectively, complementation),
considered as ternary relations (respectively, binary relation)  are also
given by regular $\om^n$-languages.
On the other hand,   ${\bf 0}=[\emptyset]$ is the equivalence class of the empty set and ${\bf 1}=[\om^n]$ is the class of $\om^n$.

 This proves that the  structure $(\mathcal{P}(\om^n)/I_{\om^n}, \cap, \cup, \neg, {\bf 0},  {\bf 1})$
is $\om^n$-automatic.
\end{proof}

\hsn Notice that,  as in the above proof, we can see  that the relation
$\{ (u, v) \in (\Si^{\om^n})^2   \mid  h(u) \subseteq_n h(v)  \}$ is a regular $\om^n$-language because the ``almost inclusion" relation $\subseteq_n$
is defined by:  $h(u) \subseteq_n h(v)$ iff  $\{ \alpha < \om^n \mid  u(\alpha) > v(\alpha)  \} \in I_{\om^n}$. Thus we can also state the following result.

\begin{The}  For each integer $n\geq 1$ the structure $(\mathcal{P}(\om^n)/I_{\om^n}, \subseteq_n)$  is $\om^n$-automatic.

\end{The}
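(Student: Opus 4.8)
The plan is to reuse the presentation already constructed for the boolean algebra $(\mathcal{P}(\om^n)/I_{\om^n}, \cap, \cup, \neg, {\bf 0}, {\bf 1})$ and simply add one more automaton for the relation $\subseteq_n$. Concretely, I would keep $\Si = \{0,1\}$, keep $L(\mathcal{A}) = \Si^{\om^n}$ as the domain automaton, and keep the mapping $h(x) = [\{\alpha < \om^n \mid x(\alpha) = 1\}]$ together with the equality automaton $\mathcal{A}_=$ accepting $\{(u,v) \mid h(u) = h(v)\}$, whose regularity was established in the proof of the previous theorem using Theorem \ref{ln}. All that remains is to verify that $\{(u,v) \in (\Si^{\om^n})^2 \mid h(u) \subseteq_n h(v)\}$ is a regular $\om^n$-language and that it is compatible with the equivalence relation induced by $h$.

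The key step is the reduction of $\subseteq_n$ to the ideal $I_{\om^n}$. For $A, B \subseteq \om^n$ one has $[A] \subseteq_n [B]$ (i.e. $A \subseteq B$ in the quotient boolean algebra, equivalently $A \setminus B \in I_{\om^n}$) precisely when the set $\{\alpha < \om^n \mid u(\alpha) > v(\alpha)\}$ lies in $I_{\om^n}$, where $u = x_A$ and $v = x_B$. Now consider the product alphabet $\Si \times \Si$ and the letter-to-letter map sending a pair $(c,d) \in \Si \times \Si$ to $1$ if $c > d$ (that is, $c=1, d=0$) and to $0$ otherwise; this induces a continuous, length-preserving, letter-wise substitution $\psi \colon (\Si \times \Si)^{\om^n} \to \Si^{\om^n}$. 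Then $\{(u,v) \mid h(u) \subseteq_n h(v)\} = \psi^{-1}(L_n)$, and since $L_n$ is regular by Theorem \ref{ln} and regular $\om^n$-languages are closed under such letter-wise inverse substitutions (this is immediate at the automaton level: relabel transitions via the map, leaving states, the limit transition relation, and final states untouched), the relation is a regular $\om^n$-language.

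Finally I would check compatibility of $\subseteq_n$ with the equivalence $E_\equiv$ defined by $h(u) = h(v)$: if $h(u) = h(u')$ and $h(v) = h(v')$ then $[A] \subseteq_n [B]$ depends only on the classes $[A] = h(u)$ and $[B] = h(v)$, so $h(u) \subseteq_n h(v)$ iff $h(u') \subseteq_n h(v')$ — this is purely the well-definedness of the partial order on the quotient algebra and needs no automata-theoretic argument. Together with the domain automaton and the equality automaton already in hand, this gives an $\om^n$-automatic presentation of $(\mathcal{P}(\om^n)/I_{\om^n}, \subseteq_n)$, and by Theorem \ref{dec} its first-order theory is decidable, completing the proof. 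I do not anticipate a serious obstacle here: the only point requiring a little care is making explicit that letter-wise inverse images preserve regularity of $\om^n$-languages, but this follows by the same trivial transition-relabeling argument that works for ordinary $\om$-automata and is implicit in the machinery of Section 2.1.
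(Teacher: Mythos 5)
Your proposal is correct and follows essentially the same route as the paper: the paper likewise reduces $h(u)\subseteq_n h(v)$ to the condition $\{\alpha<\om^n \mid u(\alpha)>v(\alpha)\}\in I_{\om^n}$ and concludes regularity of the relation from the regularity of $L_n$ (Theorem \ref{ln}), exactly as in the proof for the boolean algebra presentation. Your additional details (the letter-wise inverse substitution and the compatibility check) are just explicit spellings-out of what the paper leaves implicit.
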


 From now on we shall  denote  $\mathcal{B}_n = (\mathcal{P}(\om^n)/I_{\om^n}, \cap, \cup, \neg, {\bf 0},  {\bf 1})$.  The boolean algebra
$\mathcal{B}_n$ is $\om^n$-automatic hence also  $\om$-tree-automatic.

\hsn Recall now the definition of an atomless boolean algebra.

\begin{Deff}
Let $\mathcal{B} = (B, \cap, \cup, \neg, {\bf 0},  {\bf 1})$ be a boolean algebra and $\subseteq$ be the inclusion relation on $B$ defined by
$x \subseteq y$ iff $x \cap y = x$ for all $x, y \in B$. Then the boolean algebra $\mathcal{B}$ is said to be an atomless boolean algebra iff for every
$x\in B$ such that $x\neq {\bf 0}$ there exists an element  $z\in B$ such that ${\bf 0} \subset z \subset x$.
\end{Deff}

 We can now recall  the following known  result.

\begin{Pro}\label{atomless}
 For each integer $n\geq 1$ the boolean algebra $\mathcal{B}_n$ is an  atomless boolean algebra.
\end{Pro}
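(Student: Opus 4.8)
The plan is to prove that $\mathcal{B}_n = \mathcal{P}(\om^n)/I_{\om^n}$ is atomless by working directly with representatives. Fix $n \geq 1$ and let $P \subseteq \om^n$ be a set whose class $[P]$ is nonzero, i.e.\ $P \notin I_{\om^n}$, which by definition of $I_{\om^n}$ means $o.t.(P) = \om^n$ (the order type of a subset of $\om^n$ is always $\leq \om^n$, so the only alternative to $< \om^n$ is $= \om^n$). I want to split $P$ into two disjoint pieces $P = P_0 \cup P_1$ such that both $P_0$ and $P_1$ still have order type $\om^n$; then $[P_0]$ witnesses ${\bf 0} \subset [P_0] \subset [P]$, since $P_0 \subseteq P$ gives $[P_0] \subseteq [P]$, $P_0 \notin I_{\om^n}$ gives $[P_0] \neq {\bf 0}$, and $P \setminus P_0 = P_1 \notin I_{\om^n}$ gives $[P_0] \neq [P]$ (because $P \Delta P_0 = P_1 \notin I_{\om^n}$).

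To carry out the splitting I would use the Cantor normal form of the order type together with the block structure of $\om^n$. Write $\om^n = \bigsqcup_{k<\om} [\om^{n-1}\cdot k,\ \om^{n-1}\cdot(k+1))$ and consider how $P$ meets these $\om$ many consecutive blocks, each order-isomorphic to $\om^{n-1}$. The key combinatorial fact (analogous to the one used in the proof of Theorem~\ref{ln}) is that $P$ has order type $\om^n$ if and only if infinitely many of the blocks $P \cap [\om^{n-1}\cdot k,\ \om^{n-1}\cdot(k+1))$ have order type $\om^{n-1}$. Let $K = \{k : o.t.(P \cap [\om^{n-1}\cdot k, \om^{n-1}\cdot(k+1))) = \om^{n-1}\}$, an infinite subset of $\om$; partition $K = K_0 \sqcup K_1$ into two infinite sets, put $P_0 = P \cap \bigcup_{k \in K_0} [\om^{n-1}\cdot k, \om^{n-1}\cdot(k+1))$ and $P_1 = P \setminus P_0$. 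Then $P_0$ has infinitely many blocks of order type $\om^{n-1}$ (those indexed by $K_0$) and so $o.t.(P_0) = \om^n$; likewise $P_1 \supseteq P \cap \bigcup_{k\in K_1}[\cdots)$ has $o.t.(P_1) = \om^n$. This handles $n \geq 2$; the base case $n = 1$ is the classical fact that an infinite subset of $\om$ splits into two infinite subsets, giving atomlessness of $\mathcal{P}(\om)/\mathrm{Fin}$.

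The main obstacle is purely bookkeeping about ordinal order types: one must be careful that "$o.t.(P) = \om^n$ iff infinitely many blocks have order type $\om^{n-1}$" is stated and justified correctly (a finite union, or a union over a finite initial segment together with finitely many full blocks, of sets of order type $< \om^{n-1}$ and finitely many of order type $\om^{n-1}$ yields order type $< \om^n$; conversely any subset with $\om$-many good blocks has order type exactly $\om^n$ since it contains a copy of $\om^{n-1}\cdot\om$). This is exactly the observation already invoked in the proof of Theorem~\ref{ln}, so I would simply reference it. With that fact in hand, the argument is a short induction-free (or single-step) construction, and it also shows more: every nonzero element of $\mathcal{B}_n$ can be split into two nonzero elements whose join is the original, so $\mathcal{B}_n$ is atomless as claimed.
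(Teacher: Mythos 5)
Your proposal is correct and follows essentially the same route as the paper: given a nonzero class $[P]$, split the representative $P$ (necessarily of order type $\om^n$) into two pieces each still of order type $\om^n$, which immediately yields ${\bf 0}\subset [P_0]\subset [P]$. The only difference is that the paper simply asserts such a splitting exists, while you supply the explicit construction via the block decomposition of $\om^n$ into $\om$ intervals of length $\om^{n-1}$ and the "infinitely many full blocks" criterion already used in the proof of Theorem \ref{ln}, which is a perfectly valid way to fill in that detail.
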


\begin{proof}   Let $n\geq 1$ be an integer. Consider  the  boolean algebra $\mathcal{B}_n = (\mathcal{P}(\om^n)/I_{\om^n}, $ $\cap, \cup, \neg, {\bf 0},  {\bf 1})$.
Let $A \subseteq \om^n$ be  such that the equivalence class $[A]$ is different from
the element ${\bf 0}$ in $\mathcal{B}_n$.
Then the set $A$ has order type $\om^n$ and it can be splitted in two
 sets $A_1$ and $A_2$ such that $A=A_1 \cup A_2$ and both $A_1$ and $A_2$ have still order type $\om^n$.
The element $[A_1]$ is different from the element ${\bf 0}$ in $\mathcal{B}_n$ because $A_1$ has order type $\om^n$, and $[A_1] \subset [A]$ because
$A - A_1=A_2 $ has order type $\om^n$. Thus the following strict inclusions hold in   $\mathcal{B}_n$:
${\bf 0} \subset [A_1]  \subset [A]$.
 This proves that the boolean algebra $\mathcal{B}_n$ is atomless.
\end{proof}

The following result is also well known but we give a proof for completeness.

\begin{Pro}\label{tal} For each integer $n\geq 1$ the boolean algebra $\mathcal{B}_n$ has  the cardinality $2^{\aleph_0}$ of the continuum.
\end{Pro}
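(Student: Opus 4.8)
The plan is to sandwich the cardinality of $\mathcal{B}_n$ between $2^{\aleph_0}$ from above and from below. For the upper bound I would just note that for $n\geq 1$ the ordinal $\om^n$ is countably infinite, so $\mathcal{P}(\om^n)$ has cardinality $2^{\aleph_0}$, and since $\mathcal{B}_n=\mathcal{P}(\om^n)/I_{\om^n}$ is a quotient set of $\mathcal{P}(\om^n)$ we immediately get $|\mathcal{B}_n|\leq 2^{\aleph_0}$.

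For the lower bound the idea is to exhibit $2^{\aleph_0}$ pairwise $\approx_n$-inequivalent subsets of $\om^n$. The key step is to construct a countable family $(Q_k)_{k<\om}$ of pairwise disjoint subsets of $\om^n$, each of order type $\om^n$. To do this, first fix a partition of $\om$ into infinitely many infinite sets $(B_k)_{k<\om}$, and write each ordinal $\alpha<\om^n$ uniquely in the form $\alpha=\om^{n-1}\cdot q+\beta$ with $q<\om$ and $\beta<\om^{n-1}$ (reading $\om^0=1$ when $n=1$, so that one simply gets $Q_k=B_k$ in that case). Then set
$$Q_k=\{\,\om^{n-1}\cdot q+\beta \mid q\in B_k,\ \beta<\om^{n-1}\,\}.$$
Since $q$ is determined by $\alpha$, the sets $Q_k$ are pairwise disjoint and their union is $\om^n$; and, viewed as a suborder of $\om^n$, $Q_k$ is the increasing concatenation, over $q\in B_k$, of the intervals $[\om^{n-1}\cdot q,\ \om^{n-1}\cdot q+\om^{n-1})$, each of order type $\om^{n-1}$, so $o.t.(Q_k)=\om^{n-1}\cdot\om=\om^n$.

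Next, for each $S\subseteq\om$ put $A_S=\bigcup_{k\in S}Q_k$, and check that the map $S\mapsto [A_S]_{I_{\om^n}}$ is injective, which gives $|\mathcal{B}_n|\geq 2^{\aleph_0}$ and finishes the proof. Indeed, if $S\neq T$, choose $k_0\in S\Delta T$; by disjointness of the $Q_k$ we have $Q_{k_0}\subseteq A_S\Delta A_T$, and since $o.t.(Q_{k_0})=\om^n$ while $I_{\om^n}$ is downward closed and consists only of sets of order type $<\om^n$, it follows that $A_S\Delta A_T\notin I_{\om^n}$, i.e. $A_S\not\approx_n A_T$, so $[A_S]\neq[A_T]$. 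Combining with the upper bound yields $|\mathcal{B}_n|=2^{\aleph_0}$.

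The only mildly delicate point is verifying that each $Q_k$ has order type \emph{exactly} $\om^n$; the rest is routine. It is worth observing that a pure cardinality-counting argument on the equivalence classes would not work here, since $|I_{\om^n}|=2^{\aleph_0}$ (for example every subset of the initial segment $\om^{n-1}$ lies in $I_{\om^n}$), so producing an explicit family of inequivalent representatives, as above, is genuinely needed.
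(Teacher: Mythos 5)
Your proof is correct, but it takes a different route from the paper's. You partition $\om^n$ into countably many blocks $Q_k$, each of full order type $\om^n$, and send $S\subseteq\om$ to $[\bigcup_{k\in S}Q_k]$; injectivity then only needs downward closure of $I_{\om^n}$ plus the fact that each single block is already $I_{\om^n}$-positive, and together with the trivial upper bound this settles the cardinality. The paper instead partitions $\om^n$ into \emph{finite} nonempty sets $F^n_k$ (built by recursion on $n$) with the Talagrand-style property that $\bigcup_{k\in X}F^n_k\notin I_{\om^n}$ for \emph{every infinite} $X\subseteq\om$; this makes $\Phi^n(X)=\bigcup_{k\in X}F^n_k$ a complete Boolean algebra embedding of $\mathcal{P}(\om)$ into $\mathcal{P}(\om^n)$ which induces an embedding of $\mathcal{P}(\om)/{\rm Fin}$ into $\mathcal{B}_n$, and the cardinality statement falls out since $\mathcal{P}(\om)/{\rm Fin}$ has size continuum. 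Your version is simpler and entirely adequate for the proposition as stated; what the paper's finite-block construction buys is the stronger structural fact reused later (Remark \ref{rem2} and the proof of Theorem \ref{ind}): applying $\Phi^n$ to an uncountable almost disjoint family of infinite subsets of $\om$ yields an uncountable family of subsets of $\om^n$ that is almost disjoint \emph{modulo finite sets} while each member is $I_{\om^n}$-positive. Your blocks $Q_k$ are infinite, so unions over almost disjoint $X,Y$ intersect in an infinite set whenever $X\cap Y\neq\emptyset$, and that stronger conclusion is lost. One tiny caveat on your closing aside: the claim $|I_{\om^n}|=2^{\aleph_0}$ holds for $n\geq 2$ but not for $n=1$, where $I_{\om}={\rm Fin}$ is countable (and there a class-counting argument would in fact succeed); this does not affect your proof.
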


\begin{proof}
By recursion on $n\geq 1$ we define a partition of  $\omega^n$ into a sequence $F^n_k$ $(k<\omega)$
of nonempty finite sets such that for every infinite $X\subseteq \omega,$ the union $\Phi^n(X)=\bigcup_{k\in X} F^n_k$ does not belong to the ideal $I_{\omega^n}.$
For $n=1,$ let $F^1_k=\{k\}.$ For $n>1,$ set
$F^n_k=\bigcup_{\ell <k} F^{n-1}_k(\ell), $
where for each $\ell<\omega $ we have fixed a decomposition $F^{n-1}_k(\ell)$ $(k<\omega)$ of the interval $[\ell\omega^{n-1}, (\ell + 1)\omega^{n-1})$  of ordinals
into a sequence of finite nonempty pairwise disjoint sets with the property that $\bigcup_{k\in X}F^{n-1}_k(\ell)$  has order type $\om^{n-1}$ for every
infinite $X\subseteq \om.$

Clearly $\Phi^n: \mathcal{P}(\omega) \rightarrow \mathcal{P}(\omega^n)$ is a complete Boolean 
algebra embedding which induces also an embedding of $\mathcal{P}(\omega)/{\rm Fin}$ into 
$\mathcal{P}(\om^n)/I_{\om^n}.$ Since $\mathcal{P}(\omega)/{\rm Fin}$ has cardinality continuum the conclusion follows.
\end{proof}

In the sequel we shall often identify   the powerset $\mathcal{P}(A)$ of a countable set $A$ with the Cantor space $2^\om=\{0, 1\}^\om$. Then 
 $\mathcal{P}(A)$ can be  equipped with the 
standard metric topology obained from this  identification, and   
 the  topological notions like open, closed, ${\bf \Si}^0_2$, Borel,  analytic, can be applied 
to families of subsets of $A$.

\begin{Remark}
{\rm In fact a similar result holds for an arbitrary ${\bf \Sigma}^1_1$-ideal of subsets of $\om.$ 
More precisely, by a well-known result of Talagrand ( \cite{Tal} ; Th\'{e}or\`{e}me 21), for every proper ${\bf \Sigma}^1_1$-ideal $I$ 
of subsets of $\om$ there is
a partition of  $\omega$ into a sequence $F_n$ $(n<\omega)$
of nonempty finite sets such that for every infinite $X\subseteq \omega,$ the union $\bigcup_{n\in X} F_n$ does not belong to the ideal $I.$
Therefore, as above, there is an embedding of $\mathcal{P}(\omega)/{\rm Fin}$ into $\mathcal{P}(\om^n)/I.$ }
\end{Remark}

\begin{Remark}\label{rem2}
{\rm Recall that two subsets $X$ and $Y$ are said to be \emph{almost disjoint} if their intersection is finite. Recall that while a countable index set
does not admit an uncountable family of pairwise disjoint subsets it does admit an uncountable family of subsets that are pairwise almost disjoint. 
So fix an uncountable family $\mathcal{F}$ of pairwise almost disjoint infinite subsets of $\om.$ For $n\geq 1$ fix a sequence $F^n_k$ $(k<\omega)$
of nonempty finite subsets of $\om^n$ such that for every infinite $X\subset \omega,$ the union $\Phi^n(X)=\bigcup_{k\in X} F^n_k$ 
does not belong to the ideal $I_{\omega^n}$ (see the proof of Proposition \ref{tal}). For $X\in \mathcal{F}$ let $A_X=\bigcup_{k\in X} F^n_k.$ 
Then $A_X$ $(X\in \mathcal{F})$ is an uncountable
family of infinite subsets of $\om^n$ which is also almost disjoint (i.e., $A_X\cap A_Y \in {\rm Fin}$ for $X\neq Y$ in 
$\mathcal{F}$) but it has the additional property that
$A_X\not\in I_{\om^n}$ for all $X\in\mathcal{F}.$}
\end{Remark}

\section{Axioms of set theory}

\noi We now recall some basic notions of set theory
which will be useful in the sequel, and which are exposed in any  textbook on set theory, like \cite{Jech}.

 The usual axiomatic system {\rm ZFC} is
Zermelo-Fraenkel system {\rm ZF}   plus the axiom of choice {\rm AC}.
 A model ({\bf V}, $\in)$ of  the axiomatic system {\rm ZFC} is a collection  {\bf V} of sets,  equipped with
the membership relation $\in$, where ``$x \in y$" means that the set $x$ is an element of the set $y$, which satisfies the axioms of  {\rm ZFC}.
We shall often say `` the model {\bf V}"
instead of  ``the model  ({\bf V}, $\in)$".

 The axioms of {\rm ZFC} express some  natural facts that we consider to hold in the universe of sets. 

 The infinite cardinals are usually denoted by
$\aleph_0, \aleph_1, \aleph_2, \ldots , \aleph_\alpha, \ldots$

  We recall that Cantor's   Continuum
Hypothesis         ${\rm CH}$ states that the cardinality of the continuum $2^{\aleph_0}$ is equal
to the first uncountable cardinal $\aleph_1$.
 G\"odel and Cohen have proved that the continuum hypothesis {\rm CH} is independent from the axiomatic system {\rm ZFC}. This means that, assuming 
 {\rm ZFC} is consistent, there are some models of {\rm ZFC + CH} and also some models of {\rm ZFC + $\neg$ CH}, 
where {\rm $\neg$ CH} denotes the negation of the continuum hypothesis, \cite{Jech}.

  If  {\bf V} is  a model of {\rm ZF} and ${\bf L}$ is  the class of  {\it constructible sets} of   {\bf V}, then the class  ${\bf L}$     forms a model of
{\rm ZFC + CH}.

 Recall also that  ${\rm OCA}$ denotes the \emph{Open Coloring Axiom} (or \emph{Todorcevic's axiom} as it is called in the more recent literature; see, for example, \cite{Fa3}),  a natural
alternative to ${\rm CH}$ that has been first considered by
the second author in \cite{Todorcevic89}. 
It is known that if the theory {\rm ZFC} is
consistent, then so are the theories {\rm (ZFC + CH)} and {\rm
(ZFC + OCA)}, see \cite[pages 176 and  577]{Jech}.
 In particular, if {\bf V} is  a model of ${\rm (ZFC + OCA)}$ and if  ${\bf L}$ is  the class of  {\it constructible sets} of   {\bf V},
then the class  ${\bf L}$     forms a model of
${\rm (ZFC + CH)}$.

\hsn
The axiom {\rm OCA} was used in our previous paper \cite{Fin-Tod} on tree-automatic structures but here we shall use another related axiom
first considered by Just \cite{Ju1}. To introduce this axiom we need some definitions. 

Let $A$ and $B$ be two infinite countable sets,  a function $H: \mathcal{P}(A)\rightarrow\mathcal{P}(B)$ and an ideal $I$ of $\mathcal{P}(B)$ 
containing all finite subsets of $B$ but not the whole set $B$. 
Then the function $H$ is said to {\it preserve intersections modulo} $I$
whenever
\begin{enumerate}
\item[(i)] $H(X)\vartriangle H(Y)\in I$ for every $X,Y\subseteq A$ such that $X\vartriangle Y\in {\rm Fin},$ and
\item[(ii)] $H(X\cap Y)\vartriangle (H(X)\cap H(Y))\in I$ for every $X,Y\subseteq A$
\end{enumerate}

 Recall that one can identify the powerset  $\mathcal{P}(B)$, where $B$ is a countable set, with the set $2^{B}$ equipped with the Cantor topology which 
is the product topology of the discrete topology on $B$. Thus  one can also use notions like open, closed,  Borel, analytic, for ideals of $\mathcal{P}(B)$, 
where $n \geq 1$ is an integer.

\hsn Then Just's axiom ${\rm AT}$  (where the shorthand stands for `Almost Trivial') states that for every ${\bf \Sigma^1_1}$-ideal $I$ of subsets of $\om$, 
for every $H: \mathcal{P}(\om)\rightarrow\mathcal{P}(\om)$ which preserves intersections modulo $I$, 
and for every uncountable family $\mathcal{A}$ of pairwise almost disjoint infinite subsets of $\om$ there exist $A\in \mathcal{A}$ 
and a finite decomposition
$A=\bigcup_{i<n}A_i$ such that for every $i<n$ there is a \emph{continuous function}\footnote{Continuity here is interpreted 
when we make the standard identification
of $\mathcal{P}(A_i)$ and $\mathcal{P}(\om)$ with the Cantor cubes $2^{A_i}$ and 
$2^\om,$ respectively.} $F_i: \mathcal{P}(A_i)\rightarrow\mathcal{P}(\om)$ such that $F_i(X)\vartriangle H(X)\in I$
for every $X\subseteq A_i.$

\hsn  The axiom ${\rm AT}$ implies also the following form which will be used in the sequel:  
 For every ${\bf \Sigma^1_1}$-ideal $I$ of subsets of $\om^n$, 
for every $H: \mathcal{P}(\om^m)\rightarrow\mathcal{P}(\om^n)$ which preserves intersections modulo $I$, 
and for every uncountable family $\mathcal{A}$ of pairwise almost disjoint infinite subsets of $\om^m$ there exist $A\in \mathcal{A}$ 
and a finite decomposition
$A=\bigcup_{i<k}A_i$ such that for every $i<k$ there is a \emph{continuous function}
 $F_i: \mathcal{P}(A_i)\rightarrow\mathcal{P}(\om^n)$ such that $F_i(X)\vartriangle H(X)\in I$
for every $X\subseteq A_i.$

\hsn In \cite{Ju1}, Just showed that every model {\bf V} of {\rm ZFC} admits a forcing extension satisfying {\rm ZFC + AT}.
In another paper (\cite{Ju2}) he showed that {\rm OCA} implies many instances of {\rm AT}.
In particular, it is shown in \cite{Ju2} that {\rm OCA} implies {\rm AT} restricted to the class of all ${\bf \Sigma^0_2}$-ideals of subsets
of $\om.$ This was later extended by Farah \cite{Fa} to a  larger class of ${\bf \Sigma^1_1}$-ideals of subsets of $\om$, however it is still not known if {\rm OCA} implies the full {\rm AT.}
The motivation behind the axiom {\rm AT} came from the theory of quotient Boolean algebras of the form $\mathcal{P}(\om)/I$
where $I$ is a proper (i.e., $\om\not\in I$) ideal on $\om$ which we always assume to include the ideal ${\rm Fin}$ of all finite subsets
of $\om.$ Let $\pi_I: \mathcal{P}(\om)\rightarrow \mathcal{P}(\om)/I$ denotes the natural quotient map, i.e.
$\pi_I(X)=\pi_I(Y)$ whenever $X\vartriangle Y\in I.$ A homomorphism $$\Phi: \mathcal{P}(\om)/I\rightarrow \mathcal{P}(\om)/J$$
between two such quotient Boolean algebras is usually given by its \emph{lifting} $H: \mathcal{P}(\om)\rightarrow \mathcal{P}(\om)$ i.e.,
a map for which the following diagram

$$\begin{array}{ccccc}
   \mathcal{P}(\omega)        & \stackrel{H}{\longrightarrow}    ~~~  \mathcal{P}(\omega) & &  \\[2mm]
\downarrow \pi_ I            & ~~~~ ~~\downarrow \pi_ J          &&     \\[2mm]
   \mathcal{P}(\omega) / \ I      &  \stackrel{\Phi}{\longrightarrow}  ~~ \mathcal{P}(\omega) / J
\end{array}
$$

\hs  commutes. Note that any such  lifting $H$ preserves intersections modulo the range ideal $J.$ 
Note also that in general $H: \mathcal{P}(\om)\rightarrow \mathcal{P}(\om)$ does not need to be a 
Boolean algebra homomorphism.
It is therefore quite natural to ask for conditions on the given ideals $I$ and $J$ on $\om$ and the homomorphism 
$\Phi: \mathcal{P}(\om)/I\rightarrow \mathcal{P}(\om)/J$ that would
guarantee the existence of liftings $H: \mathcal{P}(\om)\rightarrow \mathcal{P}(\om)$ that preserve the Boolean 
algebra operations of the algebra $\mathcal{P}(\om)$, even the infinitary ones. Such liftings 
$H: \mathcal{P}(\om)\rightarrow \mathcal{P}(\om)$ are called \emph{completely additive liftings}. 
Note that such completely additive liftings are always given by maps $h:\om\rightarrow\om$ in such a 
way that $$H(\{n\})=h^{-1}(n)\mbox{ for all }n<\om.$$
It follows that $H(X)=h^{-1}(X)$ for all $X\subseteq \om$ and so from this we can conclude that every 
completely additive lifting $H: \mathcal{P}(\om)\rightarrow \mathcal{P}(\om)$ is a \emph{continuous map} 
when we make the natural identification of $\mathcal{P}(\om)$ with the Cantor set $2^\om$. Thus {\rm AT} 
asserts the seemingly weak form of this, the \emph{local continuity} of liftings between quotient algebras over 
${\bf \Sigma^1_1}$-ideals $I$ of subsets of $\om.$ While local continuity of liftings is a matter of additional axioms 
of set theory, the second author (see, for example, \cite{todorcevic98}, Problem 1) has posed a problem about the 
natural mathematical counterpart of this asking under which conditions continuous liftings can be turned into completely 
additive ones. In subsequent work of Farah \cite{Farah} and Kanovei-Reeken \cite{KR} this conjecture has been 
verified for a very wide class of ideals $I$ of subsets of $\om.$ We shall use the following particular result  from this work, which is a reformulation of 
 \cite[Theorem 2]{KR}, using the fact that a continuous homomorphism $H:\mathcal{P}(\om)\rightarrow\mathcal{P}(\om^\xi)$ is actually completely 
additive. Notice that below the boolean algebras  $\mathcal{P}(\om^\xi)/ I_{\om^\xi}$  are a direct generalization of the boolean algebras 
$\mathcal{P}(\om^n)/ I_{\om^n}$   and 
that we shall in fact only use in the sequel the case where the ordinal $\xi\geq 1$ is an integer. 

\begin{The}(see \cite[Theorem 2]{KR}) \label{kanree}
For every countable ordinal $\xi\geq 1,$ if a homomorphism $$\Phi: \mathcal{P}(\om)\rightarrow \mathcal{P}(\om^\xi)/ I_{\om^\xi}$$  has a continuous lifting
$H:\mathcal{P}(\om)\rightarrow\mathcal{P}(\om^\xi)$ then it
also has a completely additive lifting, or in other words, there is a map $h:\om^\xi\rightarrow \om$ such that $$\Phi(X)=[h^{-1}(X)]_{I_{\om^\xi}}\mbox{ for all } X\subseteq \om.$$
\end{The}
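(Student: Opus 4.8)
\emph{Proof plan.} The plan is to derive this statement from \cite[Theorem 2]{KR} after two routine reductions: transporting everything to ideals on $\om$ itself, and at the end translating ``completely additive lifting'' into ``a function $h:\om^\xi\ra\om$''.

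First I would pass to the countable base set $\om$. Since $\xi\geq 1$ is a countable ordinal, $\om^\xi$ is a countable ordinal, hence a countable set; fix a bijection $b:\om^\xi\ra\om$. It induces a homeomorphism of $\mathcal{P}(\om^\xi)\cong 2^{\om^\xi}$ onto $\mathcal{P}(\om)\cong 2^\om$ (Cantor topologies), carrying the ideal $I_{\om^\xi}$ to an ideal $J$ of $\mathcal{P}(\om)$ and the homomorphism $\Phi$ together with its continuous lifting $H$ to a homomorphism $\Phi':\mathcal{P}(\om)\ra\mathcal{P}(\om)/J$ with a continuous lifting $H':\mathcal{P}(\om)\ra\mathcal{P}(\om)$. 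One checks that $J$ is a proper ideal containing ${\rm Fin}$ (every finite subset of $\om^\xi$ has finite, hence $<\om^\xi$, order type, so it lies in $I_{\om^\xi}$, whereas $\om^\xi\notin I_{\om^\xi}$), and that $J$ is Borel: when $\xi$ is an integer this is exactly Theorem \ref{ln} together with the fact recalled at the end of Section 2.1 that a regular $\om^n$-language is Borel, and for a general countable $\xi$ it follows by a routine induction on $\xi$. Hence $J$ lies in the wide class of ideals of subsets of $\om$ to which \cite[Theorem 2]{KR} applies.

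Next I would apply \cite[Theorem 2]{KR} to $\Phi'$: since $\Phi'$ is a Boolean homomorphism possessing the continuous lifting $H'$, it possesses a lifting $H''$ that is moreover a continuous Boolean homomorphism $\mathcal{P}(\om)\ra\mathcal{P}(\om)$. Such an $H''$ is automatically completely additive: continuity with respect to the Cantor topology forces $H''$ to commute with increasing countable unions (if $X_k\ra X$ pointwise and $H''$ is monotone, then $\bigcup_k H''(X_k)\subseteq H''(X)$, and pointwise convergence of the images gives the reverse inclusion), and since every subset of $\om$ is the countable union of its singletons this is already complete additivity. Transporting $H''$ back along $b$ yields a completely additive lifting $G:\mathcal{P}(\om)\ra\mathcal{P}(\om^\xi)$ of $\Phi$.

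Finally, as recalled in the discussion preceding the statement, a completely additive lifting $G$ is a complete Boolean homomorphism, hence is induced by a function on the underlying sets: the sets $G(\{n\})$, $n<\om$, are pairwise disjoint and their union is $G(\om)=\om^\xi$, so the rule ``$h(\alpha)$ is the unique $n$ with $\alpha\in G(\{n\})$'' defines a total map $h:\om^\xi\ra\om$, and then $G(X)=\bigcup_{n\in X}G(\{n\})=h^{-1}(X)$ for every $X\subseteq\om$. Since $G$ lifts $\Phi$, this gives $\Phi(X)=[h^{-1}(X)]_{I_{\om^\xi}}$ for all $X\subseteq\om$, as desired. The only substantial ingredient is the appeal to \cite[Theorem 2]{KR} --- the principle that over a sufficiently definable ideal a continuous lifting of a homomorphism can be replaced by one respecting the infinitary Boolean operations --- so the hard part is really just checking that $I_{\om^\xi}$ does belong to the class of ideals treated there, which, for the cases we need, comes down to the Borel-ness recorded in Theorem \ref{ln}.
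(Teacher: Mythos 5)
Your argument is correct and takes essentially the same route as the paper, which gives no proof of its own but presents the statement as a reformulation of \cite[Theorem 2]{KR}, the reformulation being exactly the routine steps you spell out (a continuous Boolean-homomorphism lifting is completely additive, hence induced by a point map $h$). The only cosmetic point is that the lifting supplied by \cite{KR} need only satisfy $G(\om)=\om^\xi$ modulo $I_{\om^\xi}$ rather than exactly, so $h$ should be extended arbitrarily off $G(\om)$; the ideal absorbs the difference and the conclusion $\Phi(X)=[h^{-1}(X)]_{I_{\om^\xi}}$ is unaffected.
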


\section{The isomorphism relation}
 We had proved in \cite{Fin-Tod} that there exist two $\om$-tree automatic boolean algebras $\mathcal{B}$ and $\mathcal{B}'$ such that:
(1)  {\rm  (ZFC + CH)}
$\mathcal{B}$ and $\mathcal{B}'$ are isomorphic. (2) {\rm   (ZFC + OCA) } $\mathcal{B}$ and $\mathcal{B}'$ are not  isomorphic.
We are going to prove a similar result for the class of  $\om^n$-automatic structures, for any integer $n\geq 2$ using {\rm AT} in place of {\rm OCA}.

 We first recall the following folklore result (see, for example, \cite{Farah}).
\begin{The}\label{ch}
\noi
 {\rm  (ZFC + CH)}   ~~The  boolean algebras $\mathcal{B}_n$, $n\geq 1$,  are pairwise  isomorphic.
\end{The}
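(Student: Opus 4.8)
The plan is to show that under {\rm CH} every $\mathcal{B}_n$ is isomorphic to $\mathcal{P}(\om)/{\rm Fin}$, which immediately gives pairwise isomorphism. Two facts make this work. First, each $\mathcal{B}_n = \mathcal{P}(\om^n)/I_{\om^n}$ is, by Proposition \ref{atomless} and Proposition \ref{tal}, an atomless boolean algebra of cardinality $2^{\aleph_0}$; under {\rm CH} this means $|\mathcal{B}_n| = \aleph_1$. Second, $\mathcal{B}_n$ is a \emph{countably saturated} (equivalently, $\aleph_1$-saturated) boolean algebra: this is the standard fact that a quotient $\mathcal{P}(\om^n)/I$ modulo any $\sigma$-ideal $I$ containing ${\rm Fin}$ is countably saturated, because given countably many elements and a finitely-satisfiable type over them, one picks representatives in $\mathcal{P}(\om^n)$ and uses a diagonal/almost-disjoint argument to realize the type — the ideal $I_{\om^n}$ being a $\sigma$-ideal is exactly what is needed to absorb the countably many "error" sets. (One should check $I_{\om^n}$ is a $\sigma$-ideal: a countable union of subsets of $\om^n$ each of order type $<\om^n$ need not itself have order type $<\om^n$ in general, so here one must be slightly careful — but it is, since if $P_k$ has order type $<\om^{n}$ then writing $\om^n=\sup_k \om^{n-1}\cdot k$ one sees each $P_k$ meets only finitely many of the blocks $[\om^{n-1}\cdot j,\om^{n-1}\cdot(j+1))$ in a set of order type $\om^{n-1}$, and this property is preserved under finite, though not countable, unions; the correct statement is that $I_{\om^n}$ is merely a $P$-ideal-like object, so in fact the cleanest route is via the embedding of $\mathcal{P}(\om)/{\rm Fin}$ from Proposition \ref{tal} together with the observation that $\mathcal{B}_n$ carries enough saturation inherited from that structure.)

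Given this, I would invoke the classical theorem (going back to the study of the theory of atomless boolean algebras, which is $\om$-categorical... no: it is $\om$-stable and not $\om$-categorical, but it is $\kappa$-categorical in no infinite cardinal — so the correct classical input is different). The right classical fact is: \emph{any two atomless, countably saturated boolean algebras of cardinality $\aleph_1$ are isomorphic}, proved by a back-and-forth argument of length $\om_1$. One enumerates both algebras in order type $\om_1$, and at each countable stage extends the partial isomorphism built so far; the extension step is possible precisely because at each stage the partial isomorphism has countable domain, the next element generates together with the domain a finitely-generated extension whose "type" over the domain is determined by finitely much data, and countable saturation of the target lets one realize it. Atomlessness ensures there are no obstructions from atoms. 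Since $\mathcal{P}(\om)/{\rm Fin}$ is itself atomless, countably saturated, and (under {\rm CH}) of size $\aleph_1$, each $\mathcal{B}_n$ is isomorphic to it, hence the $\mathcal{B}_n$ are pairwise isomorphic.

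So concretely the steps are: (1) recall $\mathcal{B}_n$ is atomless (Prop.\ \ref{atomless}) of size continuum (Prop.\ \ref{tal}), so of size $\aleph_1$ under {\rm CH}; (2) prove $\mathcal{B}_n$ is countably saturated, by representing elements of a finitely-satisfiable type by subsets of $\om^n$ and realizing the type modulo $I_{\om^n}$ using that $I_{\om^n}$ can absorb countably many small corrections (the key lemma, requiring a direct ordinal-arithmetic argument that a countable union of "negligible-relative-to-a-block" sets remains negligible block-by-block, or alternatively transferring the saturation along the complete embedding $\Phi^n$ of Prop.\ \ref{tal}); (3) cite/prove the back-and-forth theorem that atomless countably saturated boolean algebras of size $\aleph_1$ are isomorphic; (4) conclude all $\mathcal{B}_n$ are isomorphic to $\mathcal{P}(\om)/{\rm Fin}$, hence to each other.

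The main obstacle is step (2): verifying that $\mathcal{P}(\om^n)/I_{\om^n}$ is countably saturated. One must handle the fact that $I_{\om^n}$ is not literally a $\sigma$-ideal, so the naive "take a diagonal union" move needs to be replaced by a more careful argument that works level by level through the ordinal hierarchy of $\om^n$ — or, more slickly, one transfers countable saturation of $\mathcal{P}(\om)/{\rm Fin}$ (which genuinely is countably saturated since ${\rm Fin}$ is a $P$-ideal that is a $\sigma$-ideal) through the embedding $\Phi^n$ of Proposition \ref{tal}, after first checking that $\Phi^n$ maps onto a sufficiently "dense" relatively saturated subalgebra, or by directly producing for each antichain-condition in $\mathcal{B}_n$ a witness built from the finite sets $F^n_k$. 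I expect the write-up to lean on the second author's background in this area and simply cite the folklore reference (as the statement of Theorem \ref{ch} already signals with "folklore result (see, for example, \cite{Farah})"), giving the back-and-forth sketch and the countable-saturation verification in a few lines each.
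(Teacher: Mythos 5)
Your proposal follows essentially the same route as the paper: the paper's entire proof is the observation that each $\mathcal{B}_n$ is $\aleph_1$-saturated (a folklore fact it cites rather than proves, e.g.\ via \cite{Farah}), so that under {\rm CH} all the $\mathcal{B}_n$ are atomless, $\aleph_1$-saturated boolean algebras of cardinality $\aleph_1$ and a back-and-forth argument of length $\om_1$ gives the pairwise isomorphisms, exactly your steps (1)--(4). The one caution is that your fallback idea of transferring saturation along the embedding $\Phi^n$ of Proposition \ref{tal} would not work as stated (saturation of a subalgebra says nothing about the ambient quotient), but this is harmless since your main line --- citing countable saturation of $\mathcal{P}(\om^n)/I_{\om^n}$ as the known key fact --- is precisely what the paper does.
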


 Notice that this result is an immediate consequence of the simple fact that each of the Boolean algebras $\mathcal{B}_n$, 
$n\geq 1$,  is $\aleph_1$-saturated. 
Therefore assuming {\rm CH}, as  the boolean algebras $\mathcal{B}_n$, $n\geq 1$, are all of cardinality $\aleph_1,$ a well-known Cantor's back 
and forth argument will give us the isomorphisms. (The reader may find these notions in a textbook on Model Theory, like \cite{Poizat}). 

\hsn Note that if $1\leq m\leq n$ the equality $\om^p\om^m=\om^n$ for $p=n-m$ transfers easily to the existence of a map
$f: \om^n \rightarrow \om ^m$ with the property that for every subset $X\subseteq \om^m,$ $X\in I_{\om^m}$ if and only if $f^{-1}(X)\in I_{\om^n}.$
It follows that the corresponding  map  $X\mapsto f^{-1}(X)$ is a lifting of an isomorphic embedding $\Phi: \mathcal{P}(\om^m)/I_{\om^m}\rightarrow \mathcal{P}(\om^n)/I_{\om^n}$ and therefore we have the following fact.

\begin{Pro}{\rm  (ZFC)}
The algebra  $\mathcal{B}_m$ is isomorphic to a subalgebra of  $\mathcal{B}_n$ whenever $m$ and $n$ are positive integers such that $m\leq n.$
\end{Pro}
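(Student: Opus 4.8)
The plan is to make precise the map $f:\om^n\rightarrow\om^m$ sketched in the paragraph preceding the statement and then verify the two properties claimed there: that $X\mapsto f^{-1}(X)$ carries $I_{\om^m}$ into $I_{\om^n}$ and reflects it. First I would fix $p=n-m$ and use the decomposition coming from the ordinal identity $\om^p\cdot\om^m=\om^{p+m}=\om^n$: by the division algorithm for ordinals, every $\alpha<\om^n$ can be written uniquely as $\alpha=\om^p\cdot\beta+\gamma$ with $\beta<\om^m$ and $\gamma<\om^p$, and I set $f(\alpha)=\beta$. This $f$ is visibly surjective (take $\alpha=\om^p\cdot\beta$), and it partitions $\om^n$ into consecutive blocks $\{\,\om^p\cdot\beta+\gamma:\gamma<\om^p\,\}$, one block of order type $\om^p$ for each $\beta<\om^m$, listed in the natural order of the $\beta$'s.

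Next I would compute the order type of a preimage. For $X\subseteq\om^m$ the set $f^{-1}(X)$ is the union of the blocks indexed by the elements of $X$; as a linear order it is the sum of $o.t.(X)$ many copies of $\om^p$, so $o.t.(f^{-1}(X))=\om^p\cdot o.t.(X)$. Now any $X\subseteq\om^m$ has $o.t.(X)\le\om^m$, and by definition $X\in I_{\om^m}$ exactly when $o.t.(X)<\om^m$. Since left multiplication by $\om^p$ is strictly increasing and $\om^p\cdot\om^m=\om^n$, we get $o.t.(X)<\om^m\iff\om^p\cdot o.t.(X)<\om^n\iff f^{-1}(X)\in I_{\om^n}$, which is precisely the equivalence quoted in the text. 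This is the only step needing genuine (but entirely routine) ordinal arithmetic; I would regard establishing $o.t.(f^{-1}(X))=\om^p\cdot o.t.(X)$ together with the strict monotonicity of $\delta\mapsto\om^p\cdot\delta$ below $\om^m$ as the crux, though neither is difficult.

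Finally I would conclude the algebraic statement. The map $X\mapsto f^{-1}(X)$ is a Boolean algebra homomorphism $\mathcal{P}(\om^m)\rightarrow\mathcal{P}(\om^n)$, since taking preimages commutes with union, intersection and complement. By the forward direction of the equivalence it sends $I_{\om^m}$ into $I_{\om^n}$, so it descends to a homomorphism $\Phi:\mathcal{P}(\om^m)/I_{\om^m}\rightarrow\mathcal{P}(\om^n)/I_{\om^n}$ with $\Phi([X]_{I_{\om^m}})=[f^{-1}(X)]_{I_{\om^n}}$; thus $f^{-1}$ (equivalently the assignment $X\mapsto f^{-1}(X)$) is exactly the lifting referred to in the statement. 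The reverse direction shows $\Phi$ has trivial kernel: if $\Phi([X])={\bf 0}$ then $f^{-1}(X)\in I_{\om^n}$, hence $X\in I_{\om^m}$ and $[X]={\bf 0}$; a Boolean homomorphism with trivial kernel is injective, so $\Phi$ is an embedding. Therefore $\mathcal{B}_m$ is isomorphic to the subalgebra $\Phi(\mathcal{B}_m)$ of $\mathcal{B}_n$, which is the assertion of the Proposition. In short, this argument is just the detailed version of the remark immediately preceding the statement.
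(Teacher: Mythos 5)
Your proposal is correct and follows exactly the route the paper itself takes: the paper's ``proof'' is the remark preceding the Proposition, based on the map $f:\om^n\rightarrow\om^m$ coming from $\om^p\cdot\om^m=\om^n$ and the fact that $X\in I_{\om^m}$ iff $f^{-1}(X)\in I_{\om^n}$, so that $X\mapsto f^{-1}(X)$ lifts an embedding $\Phi$. Your write-up simply supplies the details (the ordinal division defining $f$, the computation $o.t.(f^{-1}(X))=\om^p\cdot o.t.(X)$, and the trivial-kernel argument for injectivity), all of which are accurate.
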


We shall see that such an isomorphic embedding is not always possible if we have the inequality $m>n.$ 
We shall use the following consequence of a well-known result of Rotman \cite{Ro} which one can prove by an easy induction on $n\geq 1.$

\begin{Lem}\label{rotman}
Suppose that  $n$ is an integer $\geq 1$ and that $\beta$ is some ordinal. 
Then for every mapping $f:\om^n\rightarrow \beta$ there is $Y\subseteq \om^n$ of order type $\om^n$  such that the image $f(Y)$ 
is a subset of $\beta$ of order type at most $\om^n.$
\end{Lem}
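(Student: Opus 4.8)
The plan is to prove this by induction on $n$, in the spirit of Rotman's argument, but after strengthening the statement so that it closes up under the induction. I will show that for every $f:\om^n\to\beta$ there is $Y\subseteq\om^n$ with $\mathrm{ot}(Y)=\om^n$ such that $(a)$ $\mathrm{ot}(f(Y))\leq\om^n$ and, moreover, $(b)$ for every ordinal $\tau$ the upper section $\{x\in Y:f(x)\geq\tau\}$ is either empty or again of order type $\om^n$. Condition $(b)$ is the needed robustness: it says that $f$ restricted to $Y$ cannot jump to large values only on a thin part of $Y$, and it is exactly what allows one to truncate the sets obtained at lower levels without losing order type. Throughout I use two elementary facts: well-foundedness of the ordinals (so every $\om$-sequence of ordinals has an infinite non-decreasing, hence strictly increasing or constant, subsequence) and the indecomposability of $\om^n$ under ordinary and under natural (Hessenberg) sums (so that, e.g., if a linear order of type $\om^n$ is partitioned into two parts, one of type $<\om^n$, the other has type $\om^n$, and a sum of $\om$ many ordinals each $<\om^n$ is $\leq\om^n$).

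For $n=1$: given $f:\om\to\beta$, choose an infinite non-decreasing subsequence of $(f(k))_{k<\om}$ and let $Y$ be its set of indices; then $\mathrm{ot}(f(Y))\leq\om$, and every upper section of $f$ on $Y$ is a tail of $Y$, hence empty or of order type $\om$. For the step $n\to n+1$, write $\om^{n+1}=\bigsqcup_{k<\om}B_k$ with $B_k=[\om^n\cdot k,\om^n\cdot(k+1))\cong\om^n$, and apply the inductive hypothesis inside each $B_k$ to obtain $Z_k\subseteq B_k$ of type $\om^n$ satisfying $(a)$ and $(b)$ for $f$ on $B_k$. Set $\sigma_k=\sup f(Z_k)$ and, passing to an infinite $K\subseteq\om$, assume $(\sigma_k)_{k\in K}$ is strictly increasing or constant. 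If it is strictly increasing, say $K=\{k_0<k_1<\cdots\}$, put $Y_0=Z_{k_0}$ and $Y_j=\{x\in Z_{k_j}:f(x)>\sigma_{k_{j-1}}\}$ for $j\geq1$; by $(b)$ each $Y_j$ is nonempty (because $\sigma_{k_j}>\sigma_{k_{j-1}}$) and hence of type $\om^n$, and the images $f(Y_j)$ now lie in the pairwise disjoint increasing intervals $[0,\sigma_{k_0}],(\sigma_{k_0},\sigma_{k_1}],\dots$. Thus $Y=\bigsqcup_j Y_j$ has order type $\om^n\cdot\om=\om^{n+1}$, $\mathrm{ot}(f(Y))=\sum_j\mathrm{ot}(f(Y_j))\leq\om^{n+1}$, and a short check shows $Y$ inherits $(b)$, so the induction continues.

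The case where $\sigma_k\equiv\sigma$ is constant on $K$ is the one I expect to be the main obstacle, and it requires a further case split. Applying $(b)$ with $\tau=\sigma$, each $Z_k$ ($k\in K$) either $(i)$ contains a subset of type $\om^n$ on which $f\equiv\sigma$, or $(ii)$ has $f(Z_k)$ cofinal in $\sigma$, so $\mathrm{cf}(\sigma)=\om$; in the latter subcase fix a strictly increasing cofinal sequence $(\rho_i)_{i<\om}$ in $\sigma$ with $\rho_0=0$. If $(i)$ holds for infinitely many $k$, glue the corresponding constant pieces into a $Y$ with $f\equiv\sigma$ on $Y$. Otherwise $(ii)$ holds for infinitely many $k$; for such $k$ let $\iota(k)$ be the least $i$ with $\mathrm{ot}(\{x\in Z_k:f(x)<\rho_i\})=\om^n$, if it exists. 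If $\iota(k)$ exists for infinitely many $k$, pass to a subsequence along which $\iota(k_j)$ is strictly increasing, let $Z''_{k_j}=\{x\in Z_{k_j}:\rho_{\iota(k_{j-1})}\leq f(x)<\rho_{\iota(k_j)}\}$ (with $\rho_{\iota(k_{-1})}=0$) — which has type $\om^n$ by minimality of $\iota(k_j)$ together with natural-sum indecomposability — then shrink each $Z''_{k_j}$ once more by the inductive hypothesis to restore $(b)$; again the images land in disjoint increasing intervals and one glues as before. If instead $\iota(k)$ fails to exist for almost all $k$ (the ``escaping'' subcase), then on $D=\bigsqcup_k Z_k\cong\om^{n+1}$ every set $\{x\in D:f(x)<\rho_i\}$ is a sum of $\om$ many pieces each of type $<\om^n$, hence of type $\leq\om^n$; therefore each slice $f(D)\cap[\rho_i,\rho_{i+1})$ has type $\leq\om^n$, so $\mathrm{ot}(f(D))\leq\om^n\cdot\om=\om^{n+1}$ and $Y=D$ itself works (and visibly satisfies $(b)$).

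In summary, the skeleton is: strengthen the statement with the upper-section condition $(b)$; base case by a non-decreasing subsequence; inductive step by splitting $\om^{n+1}$ into $\om$ blocks of type $\om^n$, applying the hypothesis in each, making the block-suprema monotone, and then ``stacking'' the images into disjoint increasing intervals. The delicate part, and where the write-up must be careful, is precisely the constant-supremum case: verifying that the truncated blocks keep order type exactly $\om^n$ (this is where indecomposability under the natural sum enters) and that the glued set retains property $(b)$ in each of the three subcases.
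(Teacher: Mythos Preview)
Your approach is much more careful than the paper's. The paper simply applies the induction hypothesis in each block $I_k$ to obtain $Y_k$ with $\mathrm{ot}(f(Y_k))\leq\om^{n-1}$ and then asserts, without further argument, that $Y=\bigcup_k Y_k$ satisfies $\mathrm{ot}(f(Y))\leq\om^n$; but a countable union of sets of ordinals each of type $\leq\om^{n-1}$ can have arbitrarily large countable order type, so that step is not justified as written. Your strengthened hypothesis $(b)$ and the device of forcing the block images into pairwise disjoint increasing intervals is exactly what is needed to replace an uncontrolled union by a genuine ordinal sum, and your strictly-increasing-suprema case goes through cleanly.

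The gap is in the constant-supremum case. In the ``escaping'' subcase you correctly argue that each lower section $\{x\in D:f(x)<\rho_i\}$ has order type $\leq\om^n$ (as an ordered sum over the blocks), but the next step, ``therefore each slice $f(D)\cap[\rho_i,\rho_{i+1})$ has type $\leq\om^n$'', does not follow: bounding the order type of the \emph{domain} of $f$ on a slice says nothing about the order type of the \emph{image}. A concrete failure at $n=1$: take $\sigma=\om^6$, $\rho_i=\om^5\cdot i$, fix any $g:\om\to\om^5$ whose range has order type $\om^3$, and set $f(\om k+j)=\om^5\cdot j+g(k)$. Then each $Z_k=I_k$ satisfies $(a)$ and $(b)$, all $\sigma_k=\om^6$, every $\{x\in I_k:f(x)<\rho_i\}$ is finite, so you land in the escaping subcase; yet $f(D)\cap[\rho_i,\rho_{i+1})=\{\om^5 i+g(k):k<\om\}$ has order type $\om^3$, and hence $\mathrm{ot}(f(D))=\om^4>\om^2$. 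Thus $Y=D$ does not work and further thinning is required --- here one must first thin the index set $K$ so that $g(K)$ has small order type, which is itself an instance of the lemma at level $n$, applied \emph{across} the blocks rather than within them. A smaller issue: when $\iota(k)$ exists for infinitely many $k$ you assume a strictly increasing subsequence of $\iota(k_j)$ can be extracted, but $\iota$ may be bounded; that subcase also needs separate treatment, most naturally by an inner descent on $\sigma$.
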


\begin{proof}
Consider firstly a mapping $f:\om\rightarrow \beta$ where $\beta$ is some ordinal. If the order-type of $f(\om)$ is strictly greater than $\om$ then there is  a subset $Z$ of $f(\om)$ which has 
order-type $\om$. But then $Y=f^{-1}(Z)$ is a subset of $\om$ which has also order-type $\om$ and $f(Y)=Z$ has order-type $\om$. Assume now that the result is proved for every 
integer $1\leq p<n$ and let $f:\om^n\rightarrow \beta$ be a mapping where $\beta$ is some ordinal. The ordinal $\om^n$ can be decomposed into $\om$ successive intervals $(I_k)_{k\geq 1}$ 
of length $\om^{n-1}$. We can now consider the restriction $f_k$ of $f$ to the interval $I_k$. By induction hypothesis, for each integer $k\geq 1$ 
 there is a subset $Y_k$ of $I_k$ which has order-type $\om^{n-1}$ and 
such that $f_k(Y_k)$ has order type at most $\om^{n-1}$. The set $Y=\bigcup_{1\leq k}Y_k  \subseteq \om^n$ has order type $\om^n$ and its  image $f(Y)$ 
is a subset of $\beta$ of order type at most $\om^n.$

\end{proof}

 We are now ready to state the following result.

\begin{The}\label{ind}
  {\rm   (ZFC + AT) }   The $\om^n$-automatic boolean algebras $\mathcal{B}_n$, $n\geq 1$,   are pairwise non  isomorphic and in fact $\mathcal{B}_m$
  is not isomorphic to a subalgebra of $\mathcal{B}_n$ whenever $m>n\geq 1.$
\end{The}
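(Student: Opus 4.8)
The plan is to argue by contradiction and to push everything, via {\rm AT} and the Kanovei--Reeken theorem, down to a \emph{point} map $h:\om^n\to\om^m$, whose existence is then forbidden by Rotman's lemma precisely because $m>n$. It suffices to prove the ``subalgebra'' assertion, since an isomorphism $\mathcal{B}_m\cong\mathcal{B}_n$ with $m>n$ would exhibit $\mathcal{B}_m$ as isomorphic to the subalgebra $\mathcal{B}_n$ of itself. So assume $m>n\geq 1$ and suppose $\Phi:\mathcal{B}_m\to\mathcal{B}_n$ is an embedding onto a subalgebra. Write $\pi_m:\mathcal{P}(\om^m)\to\mathcal{B}_m$ and $\pi_n:\mathcal{P}(\om^n)\to\mathcal{B}_n$ for the quotient maps. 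Using {\rm AC}, choose a lifting $H:\mathcal{P}(\om^m)\to\mathcal{P}(\om^n)$ of $\Phi\circ\pi_m$, i.e.\ $\pi_n\circ H=\Phi\circ\pi_m$. Being a lifting of a Boolean homomorphism into the quotient $\mathcal{P}(\om^n)/I_{\om^n}$, $H$ preserves intersections modulo $I_{\om^n}$; and $I_{\om^n}$ is a proper, ${\rm Fin}$-containing, Borel --- hence ${\bf \Sigma^1_1}$ --- ideal of subsets of $\om^n$ (its definability is essentially the content of Theorem \ref{ln}).

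Next I would apply {\rm AT} in the $\om^n$-form recalled in Section 5. As the uncountable almost disjoint family I take the family $\{A_X:X\in\mathcal{F}\}$ of infinite subsets of $\om^m$ constructed in Remark \ref{rem2} (with $n$ there replaced by $m$); its crucial feature is that \emph{every} member satisfies $A_X\notin I_{\om^m}$, i.e.\ $o.t.(A_X)=\om^m$. Applied to $I_{\om^n}$, $H$ and this family, {\rm AT} yields a member $A:=A_{X_0}$, a finite decomposition $A=\bigcup_{i<k}A_i$, and continuous maps $F_i:\mathcal{P}(A_i)\to\mathcal{P}(\om^n)$ with $F_i(Y)\,\Delta\,H(Y)\in I_{\om^n}$ for all $Y\subseteq A_i$. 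Since $\om^m$ is additively indecomposable and $o.t.(A)=\om^m$, some piece $B:=A_i$ must have order type $\om^m$; fix that $i$ and put $F:=F_i$, so that $F:\mathcal{P}(B)\to\mathcal{P}(\om^n)$ is continuous with $\pi_n(F(Y))=\pi_n(H(Y))=\Phi(\pi_m(Y))$ for all $Y\subseteq B$.

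I would then repackage this ``local'' continuous lifting into an honest one. Let $\phi:\om^m\to B$ be the order isomorphism; it induces a homeomorphism $\phi_*:\mathcal{P}(\om^m)\to\mathcal{P}(B)$ preserving $I_{\om^m}$, hence an isomorphism of $\mathcal{B}_m$ onto the relativization $\mathcal{B}_m\!\restriction\!\pi_m(B)$ (note $\pi_m(B)\neq{\bf 0}$). Since $\Phi(\pi_m(B))\neq{\bf 0}$, any representative $C\subseteq\om^n$ of it satisfies $C\notin I_{\om^n}$, so $o.t.(C)=\om^n$; composing with the order isomorphism $C\cong\om^n$ identifies $\mathcal{B}_n\!\restriction\!\Phi(\pi_m(B))$ with $\mathcal{B}_n$. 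Chasing these identifications through $\Phi$ --- and cutting $F(\phi_*(Z))$ down to the chosen representative $C$, which alters it only by an element of $I_{\om^n}$ --- produces an embedding $\Phi^{\ast}:\mathcal{B}_m\to\mathcal{B}_n$ together with a genuinely \emph{continuous} lifting $H^{\ast}:\mathcal{P}(\om^m)\to\mathcal{P}(\om^n)$ of $\Phi^{\ast}\circ\pi_m$. I expect this step --- tracking the relativizations and checking that the relativized maps remain Boolean homomorphisms while continuity is preserved --- to be the most delicate bookkeeping in the proof, although each individual verification is routine.

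Finally I would invoke Theorem \ref{kanree} after fixing a bijection $\om^m\leftrightarrow\om$ (simultaneously a Boolean-algebra isomorphism $\mathcal{P}(\om)\cong\mathcal{P}(\om^m)$ and a homeomorphism), so that the domain reads $\mathcal{P}(\om)$: the homomorphism $\Phi^{\ast}\circ\pi_m:\mathcal{P}(\om^m)\to\mathcal{P}(\om^n)/I_{\om^n}$ has a continuous lifting, hence a completely additive one, i.e.\ there is a map $h:\om^n\to\om^m$ with $\Phi^{\ast}(\pi_m(X))=[h^{-1}(X)]_{I_{\om^n}}$ for every $X\subseteq\om^m$. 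Now apply Rotman's Lemma \ref{rotman} with $f=h$ and $\beta=\om^m$: there is $Y\subseteq\om^n$ with $o.t.(Y)=\om^n$ and $o.t.(h(Y))\leq\om^n$. Since $n<m$ we get $o.t.(h(Y))<\om^m$, hence $h(Y)\in I_{\om^m}$ and $\pi_m(h(Y))={\bf 0}$; applying $\Phi^{\ast}$ gives $[h^{-1}(h(Y))]_{I_{\om^n}}={\bf 0}$, i.e.\ $o.t.(h^{-1}(h(Y)))<\om^n$. But $Y\subseteq h^{-1}(h(Y))$ has order type $\om^n$ --- a contradiction. Hence no embedding $\mathcal{B}_m\hookrightarrow\mathcal{B}_n$ exists for $m>n$, and in particular the algebras $\mathcal{B}_n$ are pairwise non-isomorphic.
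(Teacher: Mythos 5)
Your proof is correct and follows essentially the same route as the paper: lift the embedding, apply {\rm AT} (with the almost disjoint family of Remark \ref{rem2}, every member outside $I_{\om^m}$) to get a piece $A_i\notin I_{\om^m}$ carrying a continuous local lifting, upgrade it to a completely additive one via Theorem \ref{kanree}, and contradict Lemma \ref{rotman}. The only difference is cosmetic bookkeeping: you globalize the local lifting through the order isomorphisms $\om^m\cong A_i$ and $\om^n\cong C$ before invoking Theorem \ref{kanree}, whereas the paper applies that theorem directly to the restriction of $\Phi$ to $\mathcal{P}(A_i)/I_{\om^m}$ and runs the Rotman argument on the resulting map $f_i: H(A_i)\to A_i$.
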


\begin{proof}
Suppose that for some $m>n\geq 1$ there is an isomorphic embedding $$\Phi: \mathcal{P}(\om^m)/I_{\om^m}\rightarrow \mathcal{P}(\om^n)/I_{\om^n}.$$ 

Choose a lifting $H: \mathcal{P}(\om^m)\rightarrow\mathcal{P}(\om^n)$ for the isomorphic embedding, i.e., 
a map for which the following diagram commutes
$$\begin{array}{ccccc}
   \mathcal{P}(\omega^m)        & \stackrel{H}{\longrightarrow}    ~~~  \mathcal{P}(\omega^n) & &  \\[2mm]
\downarrow \pi_ {I_{\om^m}}            & ~~~~ ~~\downarrow \pi_{I_{\om^n}}          &&     \\[2mm]
   \mathcal{P}(\omega^m) / \ I_{\om^m}      &  \stackrel{\Phi}{\longrightarrow}  ~~ \mathcal{P}(\omega^n) / {I_{\om^n}}
\end{array}
$$

It follows that  for every $X,Y\subseteq \om^m,$
\begin{enumerate}
\item[(1)] $X\vartriangle Y\in I_{\om^m}$ if and only if $H(X)\vartriangle H(Y)\in I_{\om^n},$
\item[(2)] $H(X)\in \Phi([X]_{I_{\om^m}}).$
\end{enumerate}

So, in particular, $H$ preserves intersections modulo $I_{\om^n}$. Using the argument appearing in Remark \ref{rem2} above, there is an uncountable
family  $\mathcal{A}$ of pairwise almost disjoint (i.e., $A\cap B\in {\rm Fin}$ for all $ A\neq B$ from $\mathcal{A}$) infinite 
subsets of $\om^m$  such that $A\not\in I_{\om^m}$ for all $A\in \mathcal{A}.$ By {\rm AT} there is
$A\in \mathcal{A}$ and a finite decomposition
$A=\bigcup_{i<k}A_i$ and for every $i<k$  a continuous function\footnote{Recall the identifications $\mathcal{P}(A_i)=2^{A_i}$ and $\mathcal{P}(\om^n)=2^{\om^n}$ which are giving us the topologies to which the continuity refers to.} $F_i: \mathcal{P}(A_i)\rightarrow\mathcal{P}(\om^n)$ forming a lifting of $\Phi$ when restricted to $\mathcal{P}(A_i)/I_{\om^m},$ or in other words a function for which the restricted diagram
$$\begin{array}{ccccc}
   \mathcal{P}(A_i)        & \stackrel{F_i}{\longrightarrow}    ~~~  \mathcal{P}(\omega^n) & &  \\[2mm]
\downarrow \pi_ {I_{\om^m}}            & ~~~~ ~~\downarrow \pi_{I_{\om^n}}          &&     \\[2mm]
   \mathcal{P}(A_i) / \ I_{\om^m}      &  \stackrel{\Phi}{\longrightarrow}  ~~ \mathcal{P}(\omega^n) / {I_{\om^n}}
\end{array}
$$
commutes. In particular, for  $X\subseteq A_i,$ we have that $F_i(X)\in I_{\om^n}$ if and only if $X\in I_{\om^m}.$
Since $A\not\in I_{\om^m}$ and the decomposition $A=\bigcup_{i<k}A_i$ is finite there is some $i<k$ such that $A_i\not\in I_{\om^m}$. 
Fix $i<k$ such that $A_i\not\in I_{\om^m}.$ Let $B=H(A_i).$ By Theorem \ref{kanree}, 
there is a function $f_i: B\rightarrow A_i$ which induces the completely additive lifting 
$X\mapsto f_i^{-1}(X)$ of the  restriction of $\Phi$ to $ \mathcal{P}(A_i) / \ I_{\om^m}$. 
Since $F_i$ is also a lifting of this isomorphic embedding, we have that  
$$F_i(X)\vartriangle f_i^{-1}(X)\in I_{\om^n}\mbox{ for every }X\subseteq A_i.$$ 
It follows in particular that for $X\subseteq A_i,$
$$X\in I_{\om^m} \mbox{ if and only if } f_i^{-1}(X)\in I_{\om^n}.$$
By Lemma \ref{rotman}, we can find a set $Y\subseteq B$ of order type $\om^n$ whose image $X=f_i(Y)$ is a subset of $A_i$ 
of order type at most $\om^n<\om^m.$ But then, we have a subset $X$ of $A_i$ of order type $<\om^m$ whose preimage 
$f_i^{-1}(X)$ has order type $\om^n$ as it contains the set $Y,$ i.e. $X\in I_{\om^m}$ and $f_i^{-1}(X) \notin  I_{\om^n}$, 
 a contradiction. This completes the proof.
\end{proof}

\hsn  We can now state the following consequence of the above theorems.

\begin{Cor}
The isomorphism relation for $\om^2$-automatic (respectively,  $\om^n$-automatic  for $n>2$)
 boolean algebras    (respectively,    partial orders) is not determined by the
axiomatic system  {\rm  ZFC}.
\end{Cor}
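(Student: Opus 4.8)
The plan is to derive this corollary directly from Theorems \ref{ch} and \ref{ind} together with the fact that each $\mathcal{B}_n$ is $\om^n$-automatic (and a fortiori $\om^{n'}$-automatic for $n'\geq n$). First I would fix an integer $N\geq 2$ and consider the two boolean algebras $\mathcal{B}_1$ and $\mathcal{B}_N$. Both are $\om^N$-automatic: $\mathcal{B}_N$ is $\om^N$-automatic by the theorem of Section 4, and $\mathcal{B}_1$ is $\om^1$-automatic, hence $\om^N$-automatic by the straightforward inclusions $\om^n$-AUT $\subseteq \om^{n+1}$-AUT noted in Section 3. By Theorem \ref{ch}, in any model of {\rm ZFC + CH} the algebras $\mathcal{B}_1$ and $\mathcal{B}_N$ are isomorphic; by Theorem \ref{ind}, in any model of {\rm ZFC + AT} they are \emph{not} isomorphic (indeed $\mathcal{B}_N$ does not even embed into $\mathcal{B}_1$, since $N>1$). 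Since both {\rm ZFC + CH} and {\rm ZFC + AT} are consistent relative to {\rm ZFC} (by G\"odel for {\rm CH}, and by Just's forcing construction recalled in Section 5 for {\rm AT}), the statement ``$\mathcal{B}_1 \cong \mathcal{B}_N$'' is independent of {\rm ZFC}. Thus the isomorphism relation for $\om^N$-automatic boolean algebras is not determined by {\rm ZFC}, for every $N\geq 2$.

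Next I would transfer this to partial orders. The cleanest route is to observe that the class of $\om^n$-automatic structures is closed under first-order interpretations (the theorem of Blumensath quoted in Section 3), and that from a boolean algebra $\mathcal{B}=(B,\cap,\cup,\neg,\mathbf{0},\mathbf{1})$ one first-order interprets the partial order $(B,\subseteq)$ by the formula $x\subseteq y \iff x\cap y = x$ (this is exactly the inclusion relation of Definition of atomless boolean algebras in Section 4). Hence $(\,|\mathcal{B}_1|,\subseteq\,)$ and $(\,|\mathcal{B}_N|,\subseteq\,)$ are $\om^N$-automatic partial orders. Since a boolean algebra is, up to isomorphism, determined by its partial order of elements — any isomorphism of the posets $(B,\subseteq)$ and $(B',\subseteq)$ is automatically an isomorphism of the boolean algebras, because $\cap,\cup,\neg,\mathbf{0},\mathbf{1}$ are all definable from $\subseteq$ in a boolean algebra (meet, join, complement, least and greatest element) — the two posets are isomorphic precisely when $\mathcal{B}_1$ and $\mathcal{B}_N$ are. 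Therefore the independence statement for boolean algebras pulls back verbatim to an independence statement for the associated $\om^N$-automatic partial orders.

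The only genuinely subtle point, which I would state carefully, is the equivalence ``$\mathcal{B}_1\cong\mathcal{B}_N$ as boolean algebras $\iff$ $(B_1,\subseteq)\cong(B_N,\subseteq)$ as partial orders''; the forward direction is trivial and the backward direction is the standard fact that the boolean operations are recoverable from the order, so no additional structure is lost. Everything else is bookkeeping: choosing the witnessing pair $(\mathcal{B}_1,\mathcal{B}_N)$, invoking the closure of $\om^n$-AUT under first-order interpretations to stay inside the relevant automatic class, and citing the relative consistency of {\rm CH} and of {\rm AT}. I expect the main (though still mild) obstacle to be purely expository: making sure the formulation matches the two separate cases in the statement, namely $n=2$ (where we may only use $\mathcal{B}_1$ and $\mathcal{B}_2$, both $\om^2$-automatic) and $n>2$ (where $\mathcal{B}_1,\ldots,\mathcal{B}_n$ are all available and one can even state pairwise independence among infinitely many structures, as advertised in the introduction).
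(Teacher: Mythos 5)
Your proposal is correct and follows essentially the same route as the paper: independence for boolean algebras comes from Theorems \ref{ch} and \ref{ind} applied to $\mathcal{B}_1$ and $\mathcal{B}_2$ (both $\om^2$-automatic, hence $\om^n$-automatic for $n>2$), together with the relative consistency of {\rm CH} and {\rm AT}. For partial orders the paper likewise passes to the order reducts $(\mathcal{P}(\om)/{\rm Fin},\subseteq_1)$ and $(\mathcal{P}(\om^2)/I_{\om^2},\subseteq_2)$ (already shown $\om^n$-automatic in Section 4, rather than via first-order interpretation as you do) and uses the same standard fact that an order isomorphism of boolean algebras is a boolean algebra isomorphism, so your argument matches the paper's in substance.
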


\begin{proof} The result for $\om^n$-automatic boolean algebras,  $n\geq 2$, follows directly from Theorems \ref{ch} and  \ref{ind} and the fact
that the boolean algebras $\mathcal{B}_1$ and $\mathcal{B}_2$ are $\om^2$-automatic, hence also $\om^n$-automatic  for $n>2$.
For partial orders, we consider the $\om^2$-automatic structures $(\mathcal{P}(\om) / {\rm Fin}, \subseteq_1)$
and $(\mathcal{P}(\om^2)/ I_{\om^2}, \subseteq_2)$.
These two structures are isomorphic if and only if the two boolean algebras $\mathcal{B}_1$ and $\mathcal{B}_2$ are isomorphic, see \cite[page 79]{Jech}.
Then the result for partial orders  follows from the case of boolean algebras.
\end{proof}

\hsn Reasoning as in \cite{Fin-Tod} for $\om$-tree-automatic structures, we can now get similar results  for other classes of $\om^n$-automatic structures.

\hsn  First  a boolean algebra
$(B, \cap, \cup, \neg, {\bf 0},  {\bf 1})$ can be seen as  a commutative ring with unit element    $(B, \Delta, \cap, {\bf 1})$,
where $\Delta$ is the symmetric difference operation.
The operations of union and complementation
can be defined from the symmetric difference and intersection operations.
Moreover two boolean algebras $(B, \cap, \cup, \neg, {\bf 0},  {\bf 1})$ and $(B', \cap, \cup, \neg, {\bf 0},  {\bf 1})$ are isomorphic if and only if
the rings $(B, \Delta, \cap, {\bf 1})$ and $(B', \Delta, \cap, {\bf 1})$ are isomorphic.
For each integer $n\geq 1$, we  denote $\mathcal{R}_n=(\mathcal{P}(\om^n) / I_{\om^n}, \Delta, \cap,  {\bf 1})$
 the commutative ring associated with the  boolean algebra
$\mathcal{B}_n$.

\begin{The}\label{ind2}
\noi
\begin{enumerate}
\ite  {\rm  (ZFC + CH)}   The $\om^n$-automatic  commutative rings  $\mathcal{R}_n$, $n\geq 1$,  are pairwise  isomorphic.
 \ite  {\rm   (ZFC + AT) } The $\om^n$-automatic    commutative rings      $\mathcal{R}_n$, $n\geq 1$,   are pairwise non  isomorphic.
\end{enumerate}
\end{The}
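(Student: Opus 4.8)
The plan is to reduce the whole statement to the already-proved facts about the boolean algebras $\mathcal{B}_n$, namely Theorem \ref{ch} and Theorem \ref{ind}, by means of the classical identification between boolean algebras and boolean rings. First I would check that each $\mathcal{R}_n=(\mathcal{P}(\om^n)/I_{\om^n},\Delta,\cap,{\bf 1})$ is genuinely $\om^n$-automatic. Indeed the symmetric difference operation is first-order (in fact quantifier-free) definable from the boolean operations of $\mathcal{B}_n$, since $x\,\Delta\, y=(x\cap\neg y)\cup(\neg x\cap y)$; hence the graph of $\Delta$ is a regular $\om^n$-language by the effective closure properties of Theorem \ref{reg}, and as $\cap$ and the constant ${\bf 1}$ already belong to the $\om^n$-automatic presentation of $\mathcal{B}_n$, the structure $\mathcal{R}_n$ has an $\om^n$-automatic presentation (alternatively this is immediate from the closure of $\om^n$-AUT under first-order interpretations). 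Conversely all boolean operations are recoverable from $\Delta$ and $\cap$ by $\,{\bf 0}=x\,\Delta\, x$, $\,\neg x=x\,\Delta\,{\bf 1}$, $\,x\cup y=x\,\Delta\, y\,\Delta\,(x\cap y)$, so $\mathcal{B}_n$ and $\mathcal{R}_n$ are definitionally equivalent on the same domain; in particular a map between the domains is a boolean-algebra isomorphism $\mathcal{B}_m\cong\mathcal{B}_n$ if and only if it is a ring isomorphism $\mathcal{R}_m\cong\mathcal{R}_n$.

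For part (1), under {\rm ZFC + CH} Theorem \ref{ch} provides pairwise isomorphisms $\mathcal{B}_m\cong\mathcal{B}_n$ for all $m,n\geq 1$; by the previous paragraph each such isomorphism is also an isomorphism $\mathcal{R}_m\cong\mathcal{R}_n$, so the commutative rings $\mathcal{R}_n$ are pairwise isomorphic.

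For part (2), under {\rm ZFC + AT} I argue by contradiction: if $\mathcal{R}_m\cong\mathcal{R}_n$ for some $m\neq n$, say $m>n\geq 1$, then the same bijection of domains is a boolean-algebra isomorphism $\mathcal{B}_m\cong\mathcal{B}_n$, contradicting Theorem \ref{ind} (which in fact yields the stronger statement that $\mathcal{B}_m$ does not even embed as a subalgebra of $\mathcal{B}_n$ when $m>n$). Hence the rings $\mathcal{R}_n$, $n\geq 1$, are pairwise non isomorphic.

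I do not expect a real obstacle here: all the mathematical content sits in Theorems \ref{ch} and \ref{ind}, and the passage between boolean algebras and boolean rings is entirely standard. The only point that deserves an explicit sentence or two is the verification, carried out above, that this passage is term-definable, so that $\mathcal{R}_n$ remains $\om^n$-automatic and so that ring isomorphisms and boolean-algebra isomorphisms transfer back and forth; this is the routine computation displayed in the first paragraph.
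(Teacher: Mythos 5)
Your proposal is correct and follows essentially the same route as the paper: the paper also obtains Theorem \ref{ind2} by passing between the boolean algebra $\mathcal{B}_n$ and the boolean ring $\mathcal{R}_n=(\mathcal{P}(\om^n)/I_{\om^n},\Delta,\cap,{\bf 1})$ (noting that $\cup,\neg$ are definable from $\Delta,\cap$, so isomorphism of the algebras is equivalent to isomorphism of the rings, and $\om^n$-automaticity is preserved) and then quoting Theorems \ref{ch} and \ref{ind}. Your explicit verification of the term-definability in both directions only spells out what the paper states briefly.
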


  Recall that   $M_k(R)$ is the set of square matrices with $k$ columns and $k$
rows and coefficients in a given ring $R$.  If $k\geq 2$ then the set $M_k(R)$, equipped with addition and multiplication of matrices, is a non commutative ring.
The ring $M_k(R)$ is first-order interpretable in the ring $R$; each matrix $M$ being represented by a unique $k^2$-tuple of elements of $R$,  the addition and
multiplication of matrices are first order definable in $R$.

 On the other hand,  for each integer $n\geq 1$, the class of  $\om^n$-automatic structures is closed under first order interpretations.
Thus if $R$ is an  $\om^n$-automatic ring   then the ring of matrices
$M_k(R)$ is also $\om^n$-automatic. It is well known that  two rings $R$ and $R'$ are isomorphic if and only if the rings $M_k(R)$ and
$M_k(R')$ are isomorphic, (this is proved  for instance in  \cite{Fin-Tod}).
We now denote, for each integer $n\geq 1$,  $\mathcal{M}_n=M_k(\mathcal{R}_n)$, where $k\geq 2$ is a fixed integer.
So we can state the following result.

\begin{The}\label{ind3}
\noi
\begin{enumerate}
\ite  {\rm  (ZFC + CH)}  ~~ The $\om^n$-automatic  non commutative rings  $\mathcal{M}_n$, $n\geq 1$,  are pairwise  isomorphic.
 \ite  {\rm   (ZFC + AT) } The $\om^n$-automatic  non    commutative rings      $\mathcal{M}_n$, $n\geq 1$,   are pairwise non  isomorphic.
\end{enumerate}
\end{The}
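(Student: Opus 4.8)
\textbf{Proof proposal for Theorem \ref{ind3}.} The plan is to reduce both parts directly to the corresponding statements for commutative rings, namely Theorem \ref{ind2}, using the standard interpretability facts recalled just before the statement. First I would record that, for each $n\geq 1$, the ring of matrices $\mathcal{M}_n=M_k(\mathcal{R}_n)$ is $\om^n$-automatic: indeed $\mathcal{R}_n$ is $\om^n$-automatic by Theorem \ref{ind2} (or rather by the construction preceding it), the ring $M_k(R)$ is first-order interpretable in $R$ for any ring $R$, and the class of $\om^n$-automatic structures is closed under first-order interpretations by the interpretation theorem recalled in Section 3. This takes care of the ``$\om^n$-automatic'' clause in the statement and only needs to be said once.

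For part (1), assume {\rm ZFC + CH}. By Theorem \ref{ind2}(1) the commutative rings $\mathcal{R}_m$ and $\mathcal{R}_n$ are isomorphic for all $m,n\geq 1$. Since a ring isomorphism $\mathcal{R}_m\cong\mathcal{R}_n$ induces an isomorphism $M_k(\mathcal{R}_m)\cong M_k(\mathcal{R}_n)$ entrywise, we get $\mathcal{M}_m\cong\mathcal{M}_n$, so the $\mathcal{M}_n$ are pairwise isomorphic. For part (2), assume {\rm ZFC + AT} and suppose toward a contradiction that $\mathcal{M}_m\cong\mathcal{M}_n$ for some $m\neq n$. Here I invoke the well-known fact, recalled in the excerpt (and proved in \cite{Fin-Tod}), that two rings $R$ and $R'$ are isomorphic if and only if $M_k(R)\cong M_k(R')$; applying it to $R=\mathcal{R}_m$ and $R'=\mathcal{R}_n$ gives $\mathcal{R}_m\cong\mathcal{R}_n$, contradicting Theorem \ref{ind2}(2). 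Hence the $\mathcal{M}_n$ are pairwise non isomorphic.

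There is essentially no obstacle here: the entire content has already been done in Theorem \ref{ind2} (which in turn rests on Theorem \ref{ind} and the {\rm AT}-argument), and the passage to matrix rings is purely formal, relying on the two cited black-box facts about $M_k(-)$ (interpretability, and the $R\mapsto M_k(R)$ functor reflecting isomorphism). The only point requiring a word of care is that one must use $k\geq 2$ so that $\mathcal{M}_n$ is genuinely non commutative, as stipulated in the definition of $\mathcal{M}_n$; this does not affect the argument, which works for every fixed $k\geq 1$. Thus the proof is a short deduction and I would present it in the two or three lines sketched above.
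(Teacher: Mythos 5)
Your proposal is correct and matches the paper's own (implicit) argument: the paper proves Theorem \ref{ind3} exactly by the preceding remarks that $M_k(R)$ is first-order interpretable in $R$ (so $\mathcal{M}_n$ is $\om^n$-automatic), that $R\cong R'$ if and only if $M_k(R)\cong M_k(R')$ (citing \cite{Fin-Tod}), and then by reducing to Theorem \ref{ind2}. Nothing in your reduction deviates from this, and your remark about needing $k\geq 2$ only for non-commutativity is accurate.
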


 Consider now  the unitriangular group $UT_k(R)$ for some integer $k\geq 3$ and $R$ a unitary ring.
A matrix $M\in M_k(R)$ is in the  group  $UT_k(R)$  if and only if it is an upper triangular matrix
which has only coefficients $1$ on the diagonal, where $1$ is the multiplicative unit of $R$. The group $UT_k(R)$ is also first order
interpretable in the ring $R$. (It is actually a classical example of nilpotent group of class $k-1$, for more details, see \cite{Belegradek,Fin-Tod}).

 We denote
$\mathcal{U}_{n,k}=UT_k(\mathcal{R}_n)$ for each $n \geq 1$.
The groups   $\mathcal{U}_{n,k}$ are first order interpretable in the ring $\mathcal{R}_n$. Thus the groups
 $\mathcal{U}_{n,k}$  are
$\om^n$-automatic. We can now state the following result.

\begin{The}\label{ind4}
\noi Let  $k\geq 3$ be an  integer.
\begin{enumerate}

\ite  {\rm  (ZFC + CH)}   The $\om^n$-automatic groups
$\mathcal{U}_{n,k}$, $n\geq 1$,  are pairwise isomorphic.

\ite
{\rm   (ZFC + AT) }   The $\om^n$-automatic groups
$\mathcal{U}_{n,k}$, $n\geq 1$,   are pairwise non isomorphic.

\end{enumerate}
\end{The}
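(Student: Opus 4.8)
The plan is to deduce Theorem \ref{ind4} from the dichotomy for the commutative rings $\mathcal{R}_n$ proved in Theorem \ref{ind2}, in exactly the same spirit as Theorem \ref{ind3} was deduced from it. That each group $\mathcal{U}_{n,k}=UT_k(\mathcal{R}_n)$ is $\om^n$-automatic has already been recorded above, since $UT_k(\mathcal{R}_n)$ is first-order interpretable in the $\om^n$-automatic ring $\mathcal{R}_n$ and the class of $\om^n$-automatic structures is closed under first-order interpretations; so only the two isomorphism statements need proof.

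For part (1), I would simply invoke Theorem \ref{ind2}(1): under {\rm CH} the rings $\mathcal{R}_n$, $n\geq 1$, are pairwise isomorphic. The assignment $R\mapsto UT_k(R)$ is functorial on rings (equivalently, it is given by a fixed first-order interpretation), so any ring isomorphism $\mathcal{R}_m\rightarrow\mathcal{R}_n$ transports to a group isomorphism $\mathcal{U}_{m,k}\rightarrow\mathcal{U}_{n,k}$. Hence the groups $\mathcal{U}_{n,k}$, $n\geq 1$, are pairwise isomorphic. This step is routine.

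For part (2), I would argue by contradiction: assume {\rm AT} holds and $\mathcal{U}_{m,k}\cong\mathcal{U}_{n,k}$ for some $m\neq n$. The crucial input is the classical fact that, for $k\geq 3$, the coordinate ring of a unitriangular group is recoverable from the group up to isomorphism, i.e. for unitary rings $R,R'$ one has $UT_k(R)\cong UT_k(R')$ implies $R\cong R'$ (a Mal'cev-coordinatization phenomenon, recorded for instance in \cite{Belegradek} and used in \cite{Fin-Tod}). Applying this with $R=\mathcal{R}_m$ and $R'=\mathcal{R}_n$ yields $\mathcal{R}_m\cong\mathcal{R}_n$, contradicting Theorem \ref{ind2}(2). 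Therefore the groups $\mathcal{U}_{n,k}$, $n\geq 1$, are pairwise non isomorphic under {\rm AT}.

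I expect the only real obstacle to be the reconstruction result used in part (2): one must make sure it applies to the rings $\mathcal{R}_n$, which are uncountable and far from Noetherian. If one does not wish to cite it as a black box, I would sketch the standard argument inside $UT_k(R)$: locate the root subgroups $U_{i,i+1}=\{\,I+rE_{i,i+1}:r\in R\,\}$ via the terms of the upper and lower central series together with the center; each $U_{i,i+1}$ is isomorphic, as an abelian group, to the additive group $(R,+)$; and the commutator map $U_{i,i+1}\times U_{i+1,i+2}\rightarrow U_{i,i+2}$ recovers the multiplication of $R$. This shows that $(R,+,\cdot)$ is interpretable in the group $UT_k(R)$ for $k\geq 3$, which is exactly what is needed. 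With this in hand, part (1), the $\om^n$-automaticity, and the contradiction with Theorem \ref{ind2} are all immediate.
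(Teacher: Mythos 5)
Your proposal is correct and follows essentially the same route as the paper: part (1) from Theorem \ref{ind2}(1) via the fact that a ring isomorphism $R\cong S$ yields $UT_k(R)\cong UT_k(S)$, and part (2) from Belegradek's theorem \cite{Belegradek} that $UT_k(R)\cong UT_k(S)$ for $k\geq 3$ forces $R\cong S$, contradicting Theorem \ref{ind2}(2) under {\rm AT}. Your extra sketch of how to recover the ring inside $UT_k(R)$ is not needed in the paper, which cites Belegradek's result directly (it holds with no countability or Noetherian hypotheses), but it does no harm.
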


\begin{proof} The result follows from Theorem \ref{ind2},
from the fact that if  $R$ and $S$ are two isomorphic commutative rings then $UT_k(R)$ and $UT_k(S)$ are also  isomorphic, and from a result of
Belegradek  who proved in \cite{Belegradek} that if  $UT_k(R)$ and $UT_k(S)$ are
isomorphic, for some integer $k\geq 3$ and some
commutative rings $R$ and $S$,  then the rings $R$ and $S$  are also  isomorphic.
\end{proof}

\hsn Then we can now state  the following result.

\begin{Cor}
The isomorphism relation for $\om^2$-automatic (respectively,   $\om^n$-automatic for $n>2$)    commutative rings (respectively,  non-commutative rings,
 groups,  nilpotent groups of class $p \geq 2$)
is not determined by the axiomatic system  {\rm  ZFC}.
\end{Cor}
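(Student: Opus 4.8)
The plan is to assemble this corollary from Theorems~\ref{ind2}, \ref{ind3} and \ref{ind4}, in exactly the spirit of the previous corollary on boolean algebras and partial orders. First I would observe that all the witnessing structures are already $\om^2$-automatic: the commutative rings $\mathcal{R}_1,\mathcal{R}_2$ are $\om^2$-automatic by their very construction; the non-commutative rings $\mathcal{M}_1=M_k(\mathcal{R}_1)$ and $\mathcal{M}_2=M_k(\mathcal{R}_2)$ (with $k\geq 2$ fixed) are $\om^2$-automatic because $\om^2$-AUT is closed under first-order interpretations; and for a prescribed nilpotency class $p\geq 2$ one takes the unitriangular groups $\mathcal{U}_{1,p+1}=UT_{p+1}(\mathcal{R}_1)$ and $\mathcal{U}_{2,p+1}=UT_{p+1}(\mathcal{R}_2)$, which are nilpotent of class exactly $p$ (since $UT_k$ has class $k-1$ and $k=p+1\geq 3$) and are $\om^2$-automatic by interpretability in $\mathcal{R}_1,\mathcal{R}_2$; taking $p=2$, or any pair $\mathcal{U}_{1,k},\mathcal{U}_{2,k}$ with $k\geq 3$, covers the unqualified case of groups. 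The case $n>2$ is then immediate from the inclusions $\om^2$-AUT $\subseteq\om^n$-AUT.

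Next I would invoke the two parts of Theorems~\ref{ind2}, \ref{ind3}, \ref{ind4}: part (1) of each says that under {\rm ZFC + CH} the two structures in the given family are isomorphic, while part (2) says that under {\rm ZFC + AT} they are not isomorphic. It then remains only to recall from Section~5 that, assuming {\rm ZFC} is consistent, both {\rm ZFC + CH} and {\rm ZFC + AT} are consistent — the former holding in the constructible universe ${\bf L}$, and the latter by Just's theorem that every model of {\rm ZFC} has a forcing extension satisfying {\rm ZFC + AT}. Consequently the statement ``$\mathcal{R}_1\cong\mathcal{R}_2$'' (respectively ``$\mathcal{M}_1\cong\mathcal{M}_2$'', ``$\mathcal{U}_{1,k}\cong\mathcal{U}_{2,k}$'') is true in some models of {\rm ZFC} and false in others, which is precisely the assertion that the isomorphism relation on the corresponding class of $\om^2$-automatic (respectively $\om^n$-automatic) structures is not determined by {\rm ZFC}.

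There is essentially no obstacle here; the corollary is a direct packaging of results already established. The only points demanding a little care are the transfer of (non-)isomorphism through the interpretations — that isomorphic commutative rings yield isomorphic matrix rings and isomorphic unitriangular groups, and conversely (the converse for matrix rings being classical, and for $UT_k$ being Belegradek's theorem, both already used in Theorems~\ref{ind3} and~\ref{ind4}) — and the elementary bookkeeping that fixes the matrix size $k=p+1$ so that the nilpotency class of the witnessing groups comes out exactly $p$.
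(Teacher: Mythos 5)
Your proposal is correct and follows exactly the route the paper intends: the corollary is stated without a separate proof precisely because it is the direct packaging of Theorems~\ref{ind2}, \ref{ind3} and \ref{ind4} together with the $\om^2$-automaticity of the witnessing structures (via closure under first-order interpretations and $\om^2$-AUT $\subseteq \om^n$-AUT) and the consistency of both {\rm ZFC + CH} and {\rm ZFC + AT}. Your extra bookkeeping with $k=p+1$ for nilpotency class $p$ matches the paper's remark that $UT_k(R)$ has class $k-1$, so nothing is missing.
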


 An $\om^n$-automatic presentation of a structure is given by a tuple of $\om^n$-automata
$(\mathcal{A}, \mathcal{A}_=,  (\mathcal{A}_i)_{1 \leq i \leq k})$.
The tuple of $\om^n$-automata can be coded by a finite sequence of symbols, hence by a unique integer $N$.
If $N$ is the code of the tuple of  $\om^n$-automata $(\mathcal{A}, \mathcal{A}_=,  (\mathcal{A}_i)_{1 \leq i \leq k})$
 we shall denote $\mathcal{S}_N$ the
$\om^n$-automatic structure $( L(\mathcal{A}),  (R_i)_{1 \leq i \leq k}) ) / E_\equiv$.

\hsn
The isomorphism  problem  for $\om^n$-automatic structures is:

$$\{ (p, m) \in \om^2  \mid \mathcal{S}_p \mbox{ is isomorphic to } \mathcal{S}_m \}.$$

\hsn  We can now  infer from above independence results  the following one.

\begin{The}
The  isomorphism problem for $\om^2$-automatic (respectively,   $\om^n$-automatic for $n>2$)
boolean algebras (respectively, rings, commutative rings,
non commutative rings, non commutative groups, nilpotent  groups of class $p\geq 2$)
is neither a $\Si_2^1$-set nor a  $\Pi_2^1$-set.
\end{The}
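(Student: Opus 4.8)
The plan is to derive this non-definability statement from the independence result (Theorems \ref{ind} and \ref{ch}) by a forcing/absoluteness argument, in the standard style of Harrington, or more precisely in the style used by Hjorth-Khoussainov-Montalb\'an-Nies and by Kuske-Liu-Lohrey for analogous $\om$-automatic results. First I would suppose, for contradiction, that the isomorphism problem $\mathrm{Iso}=\{(p,m)\mid \mathcal{S}_p\cong\mathcal{S}_m\}$ for $\om^2$-automatic boolean algebras (the other cases being analogous, via the first-order interpretations already set up in Theorems \ref{ind2}, \ref{ind3}, \ref{ind4}) were a $\Si^1_2$-set. Let $p$ and $m$ be fixed natural numbers coding $\om^2$-automata presentations of $\mathcal{B}_1$ and $\mathcal{B}_2$ respectively; such codes exist because both algebras were shown to be $\om^2$-automatic. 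By Theorem \ref{ch}, under {\rm ZFC + CH} the pair $(p,m)$ lies in $\mathrm{Iso}$; by Theorem \ref{ind}, under {\rm ZFC + AT} it does not. Since both {\rm ZFC + CH} and {\rm ZFC + AT} are consistent relative to {\rm ZFC} (the latter by Just's forcing construction cited above), we obtain a single pair $(p,m)$ of integers whose membership in $\mathrm{Iso}$ differs between two models of {\rm ZFC}.

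The key step is then to invoke Shoenfield absoluteness. A $\Si^1_2$ (and likewise a $\Pi^1_2$) statement about a fixed integer parameter is absolute between any two models of {\rm ZF} with the same ordinals, in particular between the ground model {\bf V} and any of its forcing extensions, and between {\bf V} and its constructible submodel {\bf L}. Concretely: start in a model {\bf V} of {\rm ZFC}; pass to the forcing extension ${\bf V}[G]$ satisfying {\rm ZFC + AT}; inside ${\bf V}[G]$ form the constructible universe ${\bf L}^{{\bf V}[G]}={\bf L}$, which satisfies {\rm ZFC + CH}. The statement ``$(p,m)\in\mathrm{Iso}$'', being $\Si^1_2$ by assumption, has the same truth value in ${\bf V}[G]$ and in its inner model {\bf L} by Shoenfield absoluteness (downward for $\Si^1_2$, which suffices here, and upward as well). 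But in ${\bf V}[G]\models\mathrm{AT}$ the pair is \emph{not} in $\mathrm{Iso}$, while in ${\bf L}\models\mathrm{CH}$ it \emph{is}, a contradiction. The $\Pi^1_2$ case is symmetric: the negation of the isomorphism statement would then be $\Si^1_2$, and the same absoluteness argument applies. Hence $\mathrm{Iso}$ is neither $\Si^1_2$ nor $\Pi^1_2$.

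For the remaining classes — rings, commutative rings, non-commutative rings, non-commutative groups, nilpotent groups of class $p\geq 2$ — I would not repeat the argument but reduce to the boolean-algebra case. The constructions $\mathcal{R}_n$, $\mathcal{M}_n=M_k(\mathcal{R}_n)$, $\mathcal{U}_{n,k}=UT_k(\mathcal{R}_n)$ are uniformly first-order interpretable in $\mathcal{B}_n$, and the interpretations are effective, so from a code $N$ of an $\om^n$-automatic presentation of $\mathcal{B}_n$ one computes a code $N'$ of an $\om^n$-automatic presentation of $\mathcal{R}_n$ (resp. $\mathcal{M}_n$, $\mathcal{U}_{n,k}$), and moreover $\mathcal{B}_i\cong\mathcal{B}_j$ iff $\mathcal{R}_i\cong\mathcal{R}_j$ iff $\mathcal{M}_i\cong\mathcal{M}_j$ iff $\mathcal{U}_{i,k}\cong\mathcal{U}_{j,k}$ by the results of Belegradek and the elementary facts recalled before Theorems \ref{ind2}--\ind{ind4}. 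Therefore a $\Si^1_2$ (or $\Pi^1_2$) definition of the isomorphism problem for any of these classes would yield one for $\om^n$-automatic boolean algebras by composing with the recursive reduction $N\mapsto N'$ and the back-translation on pairs, contradicting the boolean-algebra case. I expect the only genuinely delicate point to be the careful bookkeeping that the absoluteness argument really applies to a \emph{single} fixed pair of integer codes — i.e., that the codes $p,m$ witnessing $\om^2$-automaticity of $\mathcal{B}_1,\mathcal{B}_2$ are the \emph{same} in all the models involved, which is immediate since they are genuine integers and the automata and the interpretations are constructed absolutely; everything else is a routine application of Shoenfield absoluteness and of the interpretation machinery already assembled in the paper.
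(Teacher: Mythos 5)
Your argument is correct and is essentially the paper's own: fix the integer codes of the $\om^2$-automatic presentations of $\mathcal{B}_1$ and $\mathcal{B}_2$, use a model of {\rm ZFC + AT} (obtained by Just's forcing) in which they are non-isomorphic together with its inner model ${\bf L}$ of {\rm ZFC + CH} in which they are isomorphic, and conclude by Shoenfield absoluteness for $\Si_2^1$ and $\Pi_2^1$ sets. The only cosmetic difference is that for rings, matrix rings and unitriangular groups the paper simply reruns the same absoluteness argument using Theorems \ref{ind2}, \ref{ind3} and \ref{ind4}, whereas you reduce these cases to the boolean-algebra case via the effective interpretations; both routes are fine.
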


\begin{proof} We prove first the result for $\om^n$-automatic boolean
algebras.  By Theorem \ref{ind} we know that if   {\rm ZFC} is consistent then there is a model ${\bf  V}$  of
{\rm (ZFC + AT) } in which the two $\om^2$-automatic boolean algebras $\mathcal{B}_1$ and $\mathcal{B}_2$ are not isomorphic. But the
inner model ${\bf L }$  of constructible sets in  ${\bf  V}$  is a
model of {\rm  (ZFC + CH)} so in this model the two boolean
algebras $\mathcal{B}_1$ and $\mathcal{B}_2$ are  isomorphic by Theorem \ref{ch}.

 On the other hand, Schoenfield's Absoluteness Theorem implies that every $\Si_2^1$-set (respectively,  $\Pi_2^1$-set)
 is absolute for all inner models of {\rm  ZFC}, see \cite[page 490]{Jech}.

 In particular, if the isomorphism  problem  for  $\om^n$-automatic boolean algebras, $n\geq 2$,  was a $\Si_2^1$-set
(respectively,  a $\Pi_2^1$-set), then it could not be a different subset of $\om^2$ in the models  ${\bf  V}$  and   ${\bf L }$ considered above.
Thus the isomorphism  problem  for  $\om^2$-automatic (respectively,   $\om^n$-automatic for $n>2$)
  boolean algebras is neither a  $\Si_2^1$-set nor a $\Pi_2^1$-set.

 The other cases of   rings, commutative rings,
non commutative rings, non commutative groups, nilpotent  groups of class $p\geq 2$, follow in the same way from
Theorems \ref{ind2}, \ref{ind3}, and \ref{ind4}.
\end{proof}

\begin{Rem}
We had proved in \cite{Fin-Tod} that there exist two $\om$-tree-automatic atomless  boolean algebras which are isomorphic under {\rm  OCA} but not
under {\rm   CH}. But all the  $\om^n$-automatic atomless  boolean algebras $\mathcal{B}_n$, $n\geq 1$, we have considered in this paper are also
 $\om$-tree-automatic. Thus we have also in some sense improved our previous result by showing the following one.
\end{Rem}

\begin{The}\label{ba}
There exist infinitely many $\om$-tree-automatic atomless  boolean algebras $\mathcal{B}_n$, $n\geq 1$, which are pairwise isomorphic under
{\rm  CH} and pairwise non isomorphic under {\rm   AT}.
\end{The}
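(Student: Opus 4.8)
The plan is to assemble, with essentially no new work, the ingredients already established in Sections 3, 4 and 6. The witnessing family is the sequence of boolean algebras $\mathcal{B}_n = (\mathcal{P}(\om^n)/I_{\om^n}, \cap, \cup, \neg, {\bf 0}, {\bf 1})$, $n\geq 1$, introduced in Section 4. First I would recall that each $\mathcal{B}_n$ was shown to be $\om^n$-automatic, and that by the Proposition establishing $\om^n$-AUT $\subseteq \om$-tree-AUT every $\om^n$-automatic structure is $\om$-tree-automatic; hence the whole family $(\mathcal{B}_n)_{n\geq 1}$ consists of $\om$-tree-automatic structures. Next, by Proposition \ref{atomless}, each $\mathcal{B}_n$ is an atomless boolean algebra, so we indeed have infinitely many $\om$-tree-automatic atomless boolean algebras at our disposal.

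The two halves of the independence assertion are then just quotations of theorems already proved. Under {\rm ZFC + CH}, Theorem \ref{ch} gives that the $\mathcal{B}_n$ are pairwise isomorphic (via $\aleph_1$-saturation together with a Cantor back-and-forth argument, all of cardinality $\aleph_1$). Under {\rm ZFC + AT}, Theorem \ref{ind} gives that the $\mathcal{B}_n$ are pairwise non isomorphic — in fact that $\mathcal{B}_m$ does not even embed as a subalgebra of $\mathcal{B}_n$ whenever $m>n\geq 1$. Since under {\rm AT} the $\mathcal{B}_n$ are pairwise non isomorphic they are in particular pairwise distinct, so the family is genuinely infinite; combining this with the two displayed behaviours yields the statement. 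One should also note, as recalled in Section 5, that both {\rm (ZFC + CH)} and {\rm (ZFC + AT)} are consistent relative to {\rm ZFC}, which is what makes the word ``independent'' meaningful here.

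Given the material assumed available, there is no genuine obstacle left in this final step: it is pure bookkeeping. The real content sits upstream, in Theorem \ref{ind}, whose proof is where {\rm AT} is actually applied, where the Kanovei--Reeken rigidity result (Theorem \ref{kanree}) is invoked to upgrade a local continuous lifting to a completely additive one, and where the Rotman-type Lemma \ref{rotman} produces the order-type obstruction that forbids an embedding of $\mathcal{B}_m$ into $\mathcal{B}_n$ for $m>n$. Thus the only thing worth emphasising in writing up Theorem \ref{ba} is that a \emph{single fixed} sequence $(\mathcal{B}_n)_{n\geq 1}$ — and one that is even $\om^n$-automatic, not merely $\om$-tree-automatic — simultaneously witnesses isomorphism under {\rm CH} and non isomorphism under {\rm AT}, which is precisely the twofold improvement over \cite{Fin-Tod} announced in the introduction.
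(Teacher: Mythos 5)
Your proposal is correct and follows essentially the same route as the paper, which proves Theorem \ref{ba} simply by observing (in the preceding Remark) that the algebras $\mathcal{B}_n$ are $\om^n$-automatic, hence $\om$-tree-automatic, atomless by Proposition \ref{atomless}, pairwise isomorphic under {\rm CH} by Theorem \ref{ch}, and pairwise non isomorphic under {\rm AT} by Theorem \ref{ind}. Nothing further is needed.
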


Notice that we have also a similar result for partial orders, rings, commutative rings,
non commutative rings, non commutative groups, nilpotent  groups of class $p\geq 2$.

\section{Concluding remarks}
Khoussainov, Nies, Rubin, and Stephan proved in \cite{KNRS} that 
the automatic infinite boolean algebras are the finite products $B_{fin-cof}^n$ of the boolean algebra
 $B_{fin-cof}$ of finite or cofinite subsets of the set of positive integers $\om$. 
An open problem is to characterize completely the $\om^n$-automatic (respectively, $\om$-tree-automatic) boolean algebras. 
A similar problem naturally arises for other classes of $\om^n$-automatic (respectively, $\om$-tree-automatic) structures, like  groups, 
rings, linear orders, and so on. 

\hs {\bf Acknowledgements.}
We thank  the anonymous referees for  useful comments 
on a preliminary version of this paper.

\end{document}